\documentclass[12pt]{amsart}
\usepackage[T1]{fontenc}
\usepackage{graphicx}
\usepackage{amssymb}
\usepackage{stmaryrd}
\usepackage{enumitem}

\usepackage{geometry}
\usepackage{tikz}
\usepackage{float}
\usepackage{url}
\usepackage{mathtools}
\usepackage{amsmath,amsfonts,amsthm}
\usepackage{mathrsfs} 
\newtheorem{theorem}{Theorem}[section]

\newtheorem{proposition}{Proposition}[section]
\newtheorem{corollary}[theorem]{Corollary}
\newtheorem{definition}{Definition}

\usetikzlibrary{positioning, arrows.meta}

\newcommand{\ew}{\mathrm{ew}}
\newcommand{\SF}{\sigma^*}
\newcommand{\SG}{\sigma(\Gamma)}
\newcommand{\SFG}{\sigma^*(\Gamma)}
\newcommand{\OG}{\overline{\Gamma}}
\author{Duaa Abdullah}
\address{\textbf{Duaa Abdullah:}  Moscow Institute of Physics and Technology (National Research University), 9 Institutskii Lane, 141700 Dolgoprudnyi, Russia}
\email{duaa1992abdullah@gmail.com}
\thanks{}

\title{Degree Variance and the Fuzzy Sigma Index in Fuzzy Graphs}

\date{}
\begin{document}

\begin{abstract}
The sigma index of a graph, defined as the population variance of its degree sequence, is a fundamental measure of structural irregularity. In this paper, we introduce and systematically investigate its natural extension to fuzzy graphs, termed the fuzzy sigma index
$$
\sigma^*(\Gamma) = \frac{1}{n} \sum_{v \in V(\Gamma)} \left( d_\Gamma(v) - \frac{2\,\mathrm{ew}}{n}\right)^2,
$$
where $d_\Gamma(v)$ denotes the fuzzy degree of a vertex $v$, and $\mathrm{ew}$ represents the fuzzy size of the fuzzy graph $\Gamma=(V,\nu, \mu)$. We establish several fundamental properties of this topological index. In particular, we derive sharp lower and upper bounds.
Analyze the behavior of $\sigma^*(\Gamma)$ under standard fuzzy graph operations.  This work provides a foundation for further study of variance-based topological indices in fuzzy graph theory.
\end{abstract}

\maketitle

\noindent\rule{17.5cm}{1.0pt}

\noindent
\textbf{Keywords:} Fuzzy graphs, Fuzzy sigma index, Degree variance, Topological indices, Graph irregularity, Extremal, Cartesian product.

\medskip

\noindent
{\bf MSC 2020:}  \textbf{Primary:} 05C72
\textbf{Secondary:} 05C07, 05C09, 05C35, 05C76.

\medskip

\noindent\rule{17.5cm}{1.0pt}

\section{Introduction}\label{sec1}
Throughout this paper, let $\Gamma=(V,E)$ denote a simple, connected graph, where $V(\Gamma)=\{v_1,v_2,\dots,v_n\}$ with $n=|V(\Gamma)|$, and $E(\Gamma)=\{e_1,e_2,\dots,e_m\}$ with $m=|E(\Gamma)|$.  The \textit{composition} of two fuzzy graphs $\Gamma_1[\Gamma_2]$ (also known as the \textit{lexicographic product}) is a construction that combines $\Gamma_1$ and $\Gamma_2$ in a structured manner. Specifically, we create $n_2$ copies of $\Gamma_1$, where $n_2 = |V_2|$. Within each copy, a replica of $\Gamma_2$ is embedded. Connections between vertices in different copies are determined by the structure of $\Gamma_1$.
Topological indices play a significant role in the study of molecular structures~\cite{Gao2016WangMR}. One such index associated with graph irregularity~\cite{Abdo2015Dimitrov} is the \emph{Sigma index}, denoted by $\sigma$. Relationships between various topological indices have been investigated in~\cite{Jahanbai2021Sheikholeslami}, while the Sigma index itself was introduced and further developed in~\cite{gutman2018inverse,Albalahi2023Alanazi,Jahanbanı2019Ediz,Ascioglu2018Cangul}, and is defined as follows:

\begin{equation}~\label{eqqsigmawj00}
\SG=\sum_{uv\in E(G)}(d_G(u)-d_G(v))^2.
\end{equation}

Closely related to Sigma index had introduced by~\cite{Criado2014Flores} as
\begin{equation}~\label{eqqsigmawj01}
\SG=\frac{1}{n}\sum_{i=1}^{n}\left( d_i-\frac{2m}{n}\right)^2.
\end{equation}
The \emph{$\sigma_t$-irregularity} (or sigma total index) of a graph $G$ is defined~\cite{Knor2025RQW,Dimitrov2023Stevanović} as
\[
\sigma_t(G) = \sum_{\{u,v\} \subseteq V(G)} (d_G(u)-d_G(v))^2,
\]
some results on $\sigma_ {t} $-irregularity had given through~\cite{Filipovski2024Dimitrov}. This index has been studied in the context of extremal graph theory, yielding bounds and characterizations of graphs maximizing $\sigma_t$-irregularity.

The primary purpose of this study is to present the fuzzy sigma index, a logical extension of the classical sigma index to fuzzy graphs that is defined as the variance of the fuzzy degree sequence. We intend to investigate its essential aspects, such as sharp bounds, extremal characterisation,  and behaviour under standard fuzzy graph operations.

%%%%%%%%%%%%%%%%%%%%%%%%%%%555
%%%%%%%%%%%%%%%%%%%%%%%%%%%%%5
\subsection{Statement of the Problem}

The \textit{sigma index} of a crisp (ordinary) graph $\Gamma$ with $n$ vertices and $m$ edges is defined as the population variance of its degree sequence. This index quantifies the irregularity of the graph $\Gamma$; it is equal to zero if and only if $\Gamma$ is regular.
In the framework of \textit{fuzzy graph theory}, a natural and significant question arises:

\noindent \textbf{Question 1.} How can the sigma index be appropriately extended to fuzzy graphs?

To answer on the question above, we should be discussed with the following objective:
\begin{enumerate}
\item Define a meaningful fuzzy analogue $\SFG$ of the classical sigma index using the fuzzy degrees
$d_\Gamma(v) = \sum_{u \neq v} \mu(v,u)$ and the fuzzy size $\ew(\Gamma)= \sum_{u < v} \mu(u,v)$.
    
\item Examine the fundamental properties of this index, including:
    \begin{itemize}
        \item Sharp lower and upper bounds,
        \item Extremal graphs attaining the minimum ($\SFG= 0$) and maximum values,
        \item Fuzzy complement graph $\OG$,
        \item Behavior on key classes of fuzzy graphs (such as stars, paths, and regular fuzzy graphs).
    \end{itemize}
\end{enumerate}

Although numerous degree-based topological indices have been extensively investigated in recent years, the variance-based sigma index has not yet been systematically studied in the fuzzy setting.

This work aims to address this gap by introducing a rigorous definition of the fuzzy sigma index and establishing its fundamental extremal and complementary properties.

%%%%%%%%%%%%%%%%%%%%%%%%%%%%%%%%%5

%=====================================
\section{Sigma Index among Fuzzy Graph}
%=====================================
To extend sigma index~\eqref{eqqsigmawj01} rigorously to a \textit{fuzzy graph}, the following definition requires for establishing a fuzzy graph and its fuzzy degrees.
The traditional edge-sum version of the Sigma index in fuzzy graph is discussed~\cite{MordesonMathew2024} such that related literature on fuzzy topological indices.
\begin{definition}[Fuzzy Graph\cite{MordesonMathew2024}]~\label{defFuzzyGraph}
Let $\Gamma=(V, \nu,\mu)$ be a triple graph where $\nu: V \to [0,1]$ is a fuzzy subset of $V$, and $\mu: V \times V \to [0,1]$ is a symmetric fuzzy relation on $\sigma$ satisfying 
    $\mu(u,v) \le \min(\sigma(u),\sigma(v))$ for all $u,v \in V$.
\end{definition}
Thus, the \textit{fuzzy degree} $d(v)$ of a vertex $v \in V(\Gamma)$ and The \textit{fuzzy size} $\mathrm{ew}$ (total fuzzy edge weight) are defined as 
\begin{equation}~\label{eqq1defFuzzyGraph}
d_\Gamma(v):= \sum_{u \in V,\, u \neq v} \mu(v,u), \quad \text{and} \quad \mathrm{ew}:= \sum_{u < v} \mu(u,v).
\end{equation}
Usually, we use $\mathrm{ew}(\Gamma)=\frac{1}{2} \sum_{v \in V} d(v)$.
For establish the \textit{fuzzy sigma index} we will consider the relationship ~\eqref{eqqsigmawj01} by replacement the edges by $\mathrm{ew}$.
\begin{definition}[Fuzzy Sigma Index]~\label{defFuzzysigma}
Let $\Gamma=(V, \nu,\mu)$ be a fuzzy graph. Then, the sigma index of $\Gamma$ is defined as
\begin{equation}~\label{eqq1defFuzzysigma}
\sigma^*(\Gamma) = \frac{1}{n} \sum_{v \in V} \left( d_\Gamma(v)-\lambda \right)^2, 
\end{equation}
where  the average fuzzy degree is $\lambda=2\ew/n$.
\end{definition}
Eq.~\eqref{eqq1defFuzzysigma} has not yet been introduced in the literature. 
The classical relationship of sigma index which treats the \emph{edge-sum} index in fuzzy graph can be obtains based on~\eqref{eqqsigmawj00} as 
\begin{equation}~\label{eqq01defFuzzysigma}
\SF(\Gamma)=\sum_{uv\in E(\Gamma)} \mu(uv)\left(d_\Gamma(u)-d_\Gamma(v)\right)^2.
\end{equation}
According to Definition~\ref{defFuzzysigma}, if $\Gamma$ is crisp graph, then $\sigma(v) \equiv 1$ and $\mu(uv) \in \{0,1\}$ where $uv\in E(\Gamma)$. Thus, from~\eqref{eqq1defFuzzysigma} and according to~\eqref{eqq1defFuzzyGraph} yields  
\begin{equation}~\label{eqq2defFuzzysigma}
\nu^{\SF(\Gamma)}=\frac{1}{\sum_{v \in V(\Gamma)} \nu(v)} \sum_{v \in V(\Gamma)} \nu(v) \left( d_\Gamma(v) -\lambda \right)^2.
\end{equation}
Eq.~\eqref{eqq2defFuzzysigma} establish the concept of \textit{weighted version} of $\Gamma$.  

\medskip 

For example, let $\Gamma$ be a fuzzy graph given by Figure~\ref{fig001fuzzygraph},  consider $V(\Gamma)=\{v_1,v_2,v_3\}$ and the fuzzy adjacency matrix for $\mu$ is
\[
\begin{pmatrix}
0 & 0.8 & 0.3 \\
0.8 & 0 & 0.6 \\
0.3 & 0.6 & 0
\end{pmatrix}.
\]
\begin{figure}[H]
\centering
\begin{tikzpicture}[
    vertex/.style={circle, draw, minimum size=6mm, inner sep=0pt, font=\bfseries},
    edge label/.style={font=\small, fill=red, inner sep=2pt, midway}
]
\node[vertex] (v1) at (0,0) {$v_1$};
\node[vertex] (v2) at (4,0) {$v_2$};
\node[vertex] (v3) at (2,3) {$v_3$};
\draw[line width=4pt, blue!30] (v1) -- (v2) node[edge label, above] {0.8};
\draw[line width=1.1pt, blue!30] (v1) -- (v3) node[edge label, left] {0.3};
\draw[line width=3pt, blue!30] (v2) -- (v3) node[edge label, right] {0.6};
\end{tikzpicture}
\caption{Fuzzy graph with edge thickness is proportional to $\mu(u,v)$.}
\label{fig001fuzzygraph}
\end{figure}
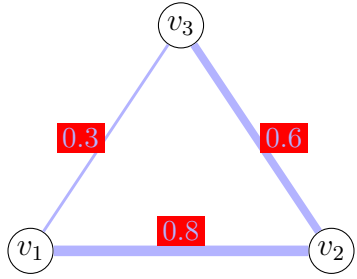
The  fuzzy degrees are $d(v_1) = 0.8 + 0.3 = 1.1$, $d(v_2)= 0.8 + 0.6 = 1.4$ and $d(v_3)= 0.3 + 0.6 = 0.9$. 
The fuzzy size is $\ew(\Gamma)= 0.8 + 0.3 + 0.6 = 1.7$. Thus, for $n=3$ we obtain $\lambda=17/15$. Then, $\sigma(\Gamma)=19/450$. 

%%%%%%%%%%%%%%%%%%%%%%%%%%%%%%%%%%%%%%%%%%%%
\subsection{Behavior under Fuzzy Operations}
%%%%%%%%%%%%%%%%%%%%%%%%%%%%%%%%%%%%%%%%%%%%
Let $\Gamma_1=(V_1, \nu_1, \mu_1)$ and $\Gamma_2=(V_2, \nu_2, \mu_2)$ be two fuzzy graphs with $|V_1|=n_1$ and $|V_2|=n_2$, fuzzy sizes $\ew_1(\Gamma)$ and $\ew_2(\Gamma)$, and average fuzzy degrees $\lambda_1$ and $\lambda_2$. 
Consider a combined graph $\Gamma=\Gamma_1*\Gamma_2$. We denote the number of vertices of $\Gamma$ by $n$, where $n=n_1+n_2$ or $n=n_1\,n_2$, depending on the graph operation.

\begin{proposition}[Union $\Gamma_1\cup \Gamma_2$]~\label{Unionfuzzy01}
Let $\Gamma_1=(V_1, \nu_1, \mu_1)$ and $\Gamma_2=(V_2, \nu_2, \mu_2)$ be two fuzzy graphs. Then
\begin{equation}~\label{eqq1Unionfuzzy01}
\SF(\Gamma_1\cup \Gamma_2)=\frac{n_1\, \SF(\Gamma_1)+n_2\, \SF(\Gamma_2)}{n}+ \frac{n_1\,n_2}{n^2}(\lambda_1-\lambda_2)^2.
\end{equation}
\end{proposition}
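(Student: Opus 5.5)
The plan is to treat the fuzzy sigma index as a population variance and apply the law of total variance. Here $\Gamma_1\cup\Gamma_2$ means the disjoint union: $V=V_1\sqcup V_2$ with $n=n_1+n_2$, and no edges are added between $V_1$ and $V_2$. In particular $\mu(u,v)=0$ whenever $u\in V_1$ and $v\in V_2$.

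\textbf{Step 1 (degrees and mean).} By \eqref{eqq1defFuzzyGraph}, every $v\in V_i$ satisfies $d_{\Gamma}(v)=d_{\Gamma_i}(v)$. Consequently $\ew(\Gamma)=\ew_1+\ew_2$. Since $2\ew_i=n_i\lambda_i$, the average fuzzy degree of the union is the weighted mean
\[
\lambda=\frac{2(\ew_1+\ew_2)}{n}=\frac{n_1\lambda_1+n_2\lambda_2}{n}.
\]

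\textbf{Step 2 (splitting the sum).} Write $n\,\SF(\Gamma)=\sum_{v\in V_1}(d(v)-\lambda)^2+\sum_{v\in V_2}(d(v)-\lambda)^2$. In the $V_i$ sum, insert $\lambda_i$ by writing $d(v)-\lambda=(d(v)-\lambda_i)+(\lambda_i-\lambda)$ and expand the square. The cross term is $2(\lambda_i-\lambda)\sum_{v\in V_i}(d(v)-\lambda_i)$, which vanishes because $\sum_{v\in V_i}d(v)=2\ew_i=n_i\lambda_i$. This gives
\[
\sum_{v\in V_i}(d(v)-\lambda)^2=n_i\SF(\Gamma_i)+n_i(\lambda_i-\lambda)^2 .
\]

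\textbf{Step 3 (between-group term).} From Step 1 we get $\lambda_1-\lambda=\frac{n_2}{n}(\lambda_1-\lambda_2)$ and $\lambda_2-\lambda=-\frac{n_1}{n}(\lambda_1-\lambda_2)$. Therefore
\[
n_1(\lambda_1-\lambda)^2+n_2(\lambda_2-\lambda)^2=\frac{n_1n_2^2+n_2n_1^2}{n^2}(\lambda_1-\lambda_2)^2=\frac{n_1n_2}{n}(\lambda_1-\lambda_2)^2 .
\]
Dividing the total by $n$ yields \eqref{eqq1Unionfuzzy01}.

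There is no real obstacle. The only point needing care is fixing the meaning of the union: it must be vertex-disjoint with no cross edges, so that degrees are preserved and $n=n_1+n_2$. The key observation is that the cross term in Step 2 vanishes. The rest is routine algebra.
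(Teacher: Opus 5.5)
Your proof is correct. The cross term in Step~2 vanishes because $\sum_{v\in V_i} d_{\Gamma_i}(v)=2\ew_i=n_i\lambda_i$. The algebra in Step~3 gives exactly $\frac{n_1n_2}{n^2}(\lambda_1-\lambda_2)^2$ after dividing by $n$.

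The paper states Proposition~\ref{Unionfuzzy01} without any proof, so there is no argument of its own to compare against. Your within-group/between-group decomposition, which is the law of total variance applied to the degree sequence, is the natural proof and fills that gap.

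Your write-up also makes two points explicit that the paper leaves implicit, and both are worth keeping:
\begin{enumerate}
\item The identity genuinely needs the union to be vertex-disjoint with no memberships between $V_1$ and $V_2$. The paper signals this only through $n=n_1+n_2$.
\item Your formula shows that the paper's remark after the proposition is not true as stated. That remark says the union of two fuzzy regular graphs has $\SF=0$. This holds only when the two regularity degrees coincide. If $\Gamma_i$ is $r_i$-regular with $r_1\neq r_2$, then $\SF(\Gamma_1\cup\Gamma_2)=\frac{n_1n_2}{n^2}(r_1-r_2)^2>0$.
\end{enumerate}
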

According to Theorem~\ref{fuzzyExtremal01}, if $\Gamma_1=(V_1, \nu_1, \mu_1)$ and $\Gamma_2=(V_2, \nu_2, \mu_2)$ is two  fuzzy  regular graphs. Then, $\SF(\Gamma_1\cup \Gamma_2)=0$.

\begin{proposition}[Join $\Gamma_1\vee \Gamma_2$]~\label{Unionfuzzy02}
Let $\Gamma_1=(V_1, \nu_1, \mu_1)$ and $\Gamma_2=(V_2, \nu_2, \mu_2)$ be two fuzzy graphs. The degrees are 
\begin{equation}~\label{eqq1Unionfuzzy02}
\begin{aligned}
&d_{\Gamma_1\vee \Gamma_2}(u)=d_{\Gamma_1}(u)+\sum_{v\in V(\Gamma_2), \, u\in V(\Gamma_1)}\min\{\nu_{\Gamma_1}(u),\nu_{\Gamma_2}(v) \}\\
&d_{\Gamma_1\vee \Gamma_2}(v)=d_{\Gamma_2}(v)+\sum_{u\in V(\Gamma_1), \,v\in V(\Gamma_2) }\min\{\nu_{\Gamma_1}(u),\nu_{\Gamma_2}(v) \}.
\end{aligned}
\end{equation}
\end{proposition}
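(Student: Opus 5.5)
The statement to prove is Proposition~\ref{Unionfuzzy02}, the degree formula for the join. I would argue directly from the definition of the fuzzy join together with the degree formula~\eqref{eqq1defFuzzyGraph}.

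First I fix the standard definition of $\Gamma_1\vee\Gamma_2$. Assume $V_1\cap V_2=\emptyset$, and set $V=V_1\cup V_2$ and $\nu=\nu_1\cup\nu_2$. The edge function is
\[
\mu(x,y)=\begin{cases}
\mu_1(x,y), & x,y\in V_1,\\
\mu_2(x,y), & x,y\in V_2,\\
\min\{\nu_1(x),\nu_2(y)\}, & x\in V_1,\ y\in V_2,
\end{cases}
\]
extended symmetrically to the case $x\in V_2$, $y\in V_1$. I then check that this really is a fuzzy graph in the sense of Definition~\ref{defFuzzyGraph}. The cross edges have weight exactly $\min\{\nu(x),\nu(y)\}$, so the constraint $\mu(x,y)\le\min\{\nu(x),\nu(y)\}$ holds for them with equality. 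The internal edges inherit the constraint from $\Gamma_1$ and $\Gamma_2$.

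Next, fix $u\in V_1$. By~\eqref{eqq1defFuzzyGraph}, $d_{\Gamma_1\vee\Gamma_2}(u)=\sum_{x\in V,\,x\ne u}\mu(u,x)$. Because $V_1$ and $V_2$ are disjoint, this sum splits into a sum over $x\in V_1\setminus\{u\}$ and a sum over $x\in V_2$.
\begin{itemize}
\item On $V_1$ we have $\mu=\mu_1$, so the first sum is exactly $d_{\Gamma_1}(u)$.
\item On $V_2$ the edge weight is $\min\{\nu_1(u),\nu_2(v)\}$, so the second sum is $\sum_{v\in V_2}\min\{\nu_1(u),\nu_2(v)\}$.
\end{itemize}
This gives the first line of~\eqref{eqq1Unionfuzzy02}. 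The case $v\in V_2$ is identical with the roles of $\Gamma_1$ and $\Gamma_2$ exchanged, using the symmetry of $\mu$. That gives the second line.

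There is no real obstacle; the only subtlety is notational. In~\eqref{eqq1Unionfuzzy02} the summation index is written as ranging over both vertices. I would state explicitly that $u$ (respectively $v$) is fixed and only the vertex of the other graph is summed over.

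As a consistency check, summing the degrees over $V$ gives
\[
\ew(\Gamma_1\vee\Gamma_2)=\ew_1+\ew_2+\sum_{u\in V_1}\sum_{v\in V_2}\min\{\nu_1(u),\nu_2(v)\}.
\]
This is the quantity later needed for $\lambda$ when computing $\SF(\Gamma_1\vee\Gamma_2)$.
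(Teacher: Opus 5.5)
Your argument is correct. The paper states this proposition without any proof, and without formally defining the fuzzy join, so your direct computation is exactly the verification it leaves implicit: you fix the standard join with cross memberships $\min\{\nu_1(u),\nu_2(v)\}$ for $u\in V_1$, $v\in V_2$, then split $\sum_{x\neq u}\mu(u,x)$ over $V_1\setminus\{u\}$ and $V_2$. Your remark that $u$ (resp.\ $v$) must be held fixed in the double-indexed sum is the only reading under which the displayed formula holds, and your formula for $\ew(\Gamma_1\vee\Gamma_2)$ is a correct consequence.
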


\begin{proposition}[Cartesian Product $\Gamma_1\square \Gamma_2$]~\label{Unionfuzzy03}
Let $\Gamma_1=(V_1, \nu_1, \mu_1)$ and $\Gamma_2=(V_2, \nu_2, \mu_2)$ be two fuzzy graphs. Then
\begin{equation}~\label{eqq1Unionfuzzy03}
\begin{aligned}
&d_{\Gamma_1\square \Gamma_2}(u,v)=d_{\Gamma_1}(u)+d_{\Gamma_2}(v)\\
&\ew(\Gamma)=n_2\,\ew_1(\Gamma)+n_1\,\ew_2(\Gamma).
\end{aligned}
\end{equation}
\end{proposition}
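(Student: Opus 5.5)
The plan is to work directly from the definition of the fuzzy Cartesian product and then sum degrees. I take the standard definition (Mordeson--Mathew): the vertex set is $V_1\times V_2$ with $\nu(u,v)=\min\{\nu_1(u),\nu_2(v)\}$. The edge memberships are
\[
\mu\bigl((u,v),(u,v')\bigr)=\min\{\nu_1(u),\mu_2(v,v')\},\qquad \mu\bigl((u,v),(u',v)\bigr)=\min\{\mu_1(u,u'),\nu_2(v)\},
\]
and all other pairs get membership $0$.

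\textbf{Degree formula.} Fix $(u,v)$. Its neighbours split into two disjoint classes: those of the form $(u,v')$ with $v'\neq v$, and those of the form $(u',v)$ with $u'\neq u$. So
\[
d_{\Gamma_1\square\Gamma_2}(u,v)=\sum_{v'\neq v}\min\{\nu_1(u),\mu_2(v,v')\}+\sum_{u'\neq u}\min\{\mu_1(u,u'),\nu_2(v)\}.
\]
The identity stated in Proposition~\ref{Unionfuzzy03} holds when the minima reduce to the $\mu$-terms, that is, when $\mu_2(v,v')\le\nu_1(u)$ and $\mu_1(u,u')\le\nu_2(v)$ for all relevant vertices. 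Then the first sum is $d_{\Gamma_2}(v)$ and the second is $d_{\Gamma_1}(u)$.

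\textbf{Main obstacle.} This condition does not follow from Definition~\ref{defFuzzyGraph} alone. The inequality $\mu\le\min\{\nu,\nu\}$ only compares an edge with its own endpoints. It does not compare an edge of $\Gamma_2$ with a vertex membership in $\Gamma_1$.

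I would resolve this by reading the statement under an explicit standing hypothesis: either the convention $\nu\equiv 1$ (the crisp-vertex case, consistent with the paper's remarks), or the condition $\mu_2\le\min_{u}\nu_1(u)$ and $\mu_1\le\min_{v}\nu_2(v)$. In general the formula is only an upper bound, $d_{\Gamma_1\square\Gamma_2}(u,v)\le d_{\Gamma_1}(u)+d_{\Gamma_2}(v)$, since each minimum is at most its $\mu$-term. I would state the dominance assumption at the start of the proof and note this inequality as a remark.

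\textbf{Size formula.} Use the handshake identity $\ew(\Gamma)=\frac12\sum_{(u,v)}d_\Gamma(u,v)$ and substitute the degree formula:
\[
\ew=\tfrac12\sum_{u\in V_1}\sum_{v\in V_2}\bigl(d_{\Gamma_1}(u)+d_{\Gamma_2}(v)\bigr)=\tfrac12\bigl(n_2\cdot 2\ew_1+n_1\cdot 2\ew_2\bigr)=n_2\ew_1+n_1\ew_2.
\]
Alternatively, count directly: each edge $uu'$ of $\Gamma_1$ appears once in each of the $n_2$ fibres $V_1\times\{v\}$, and each edge of $\Gamma_2$ appears in each of the $n_1$ fibres. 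Both routes give the same result, so the size formula follows immediately once the degree formula is established.
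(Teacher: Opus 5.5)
The paper gives no proof of this proposition. Both identities are simply asserted, the fuzzy Cartesian product is never defined, and the degree formula is then used in Theorem~\ref{fuzzyCartesian001}. Your computation is therefore in effect the proof, and it is correct. Once the degree formula is in hand, both the handshake step for $\ew$ and the fibre count are fine.

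Your caveat is not a weakness of your argument but a genuine defect of the statement. Use the standard product, with $\nu(u,v)=\min\{\nu_1(u),\nu_2(v)\}$ and the edge minima you wrote. Take $\Gamma_1=K_2$ with all memberships $1$ and $\Gamma_2=K_2$ with all memberships $\tfrac12$. Every vertex of $\Gamma_1\square\Gamma_2$ then has degree $\tfrac12+\tfrac12=1$, whereas $d_{\Gamma_1}(u)+d_{\Gamma_2}(v)=\tfrac32$. Likewise $\ew(\Gamma)=2\neq 3=n_2\ew_1+n_1\ew_2$.

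You can sharpen your remark. Each summand $\min\{\nu_1(u),\mu_2(v,v')\}$ or $\min\{\mu_1(u,u'),\nu_2(v)\}$ is at most the corresponding $\mu$-term. Hence equality of degrees at every vertex, or equality of sizes, forces termwise equality. So the condition $\mu_2\le\min_u\nu_1(u)$ and $\mu_1\le\min_v\nu_2(v)$ is necessary as well as sufficient, and in general both identities hold only as inequalities $\le$.

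Suppose instead one mimics the paper's convention for the composition, where product edges carry $\mu_1,\mu_2$ unchanged with no minima. Then the degree identity becomes a tautology. However, the product satisfies $\mu\le\min(\nu,\nu)$ with $\nu=\min\{\nu_1,\nu_2\}$ exactly under the same dominance condition. So that is the right hypothesis to add under either reading.
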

Actually, according to Proposition~\ref{Unionfuzzy03}, we obtain $d_{\Gamma_1\times \Gamma_2}(u,v)=d_{\Gamma_1}(u)\,.\,d_{\Gamma_2}(v)$. 
Let $\Gamma_1 = (V_1, \nu_1, \mu_1)$ and $\Gamma_2 = (V_2, \nu_2, \mu_2)$.
The fuzzy composition $\Gamma= \Gamma_1[\Gamma_2]$ is defined with 
the vertex set is $V = V_1 \times V_2$ (see Figure~\ref{fig001composition}), consisting of all ordered pairs $(u, v)$ with $u \in V_1$ and $v \in V_2$. For any two vertices $(u_1, v_1)$ and $(u_2, v_2)$: If $u_1 = u_2$ (same layer) $\mu_G\bigl((u_1, v_1), (u_2, v_2)\bigr) = \mu_2(v_1, v_2)$ and if  $u_1 \ne u_2$ (different layers) $ \mu_G\bigl((u_1, v_1), (u_2, v_2)\bigr) = \mu_1(u_1, u_2)$. In the second case, the edge membership depends only on the first component and is independent of $v_1$ and $v_2$. 
Then, the degrees $d(u,v)$ of $\Gamma_1[\Gamma_2]$ are $d(u,v)=n_2\, d_{\Gamma_1}(u)+d_{\Gamma_1}(v)$. 

\begin{figure}[H]
    \centering
    \begin{tikzpicture}[
    vertex/.style={circle, draw, thick, minimum size=3mm, inner sep=1pt, font=\small},
    layer/.style={rectangle, draw, dashed, rounded corners, minimum width=3cm, minimum height=3.2cm, align=center},
    arrow/.style={->, thick, -Latex, gray},
    label/.style={font=\footnotesize, fill=white, inner sep=1pt}
]

\node[vertex, fill=blue!20] (u1) at (-1,2) {$u_1$};
\node[vertex, fill=blue!20] (u2) at (1,2) {$u_2$};

\draw[line width=1pt, blue!60] (u1) -- (u2) node[label, above] { };

\node[vertex, fill=green!20] (v1) at (5.5,2.2) {$v_1$};
\node[vertex, fill=green!20] (v2) at (7,1)   {$v_2$};
\node[vertex, fill=green!20] (v3) at (8.5,2.2){$v_3$};

\draw[line width=1pt, green!70] (v1) -- (v2) node[label, below] { };
\draw[line width=1pt, green!70] (v2) -- (v3) node[label, below] { };
\draw[line width=1pt, green!70] (v3) -- (v1) node[label, left] { };

\node[layer, fill=blue!5] (L1) at (0,-1) {};
\node[vertex] (a1) at (-1,-0.5) {$u_1v_1$};
\node[vertex] (a2) at (0,-1.8)  {$u_1v_2$};
\node[vertex] (a3) at (1,-0.5)  {$u_1v_3$};

\draw[line width=1pt, green!60] (a1) -- (a2) node[label, below] { };
\draw[line width=1pt, green!60] (a2) -- (a3) node[label, below] { };
\draw[line width=1pt, green!60] (a3) -- (a1) node[label, above] { };

\node[layer, fill=blue!5] (L2) at (7,-1) {};
\node[vertex] (b1) at (6,-0.5)  {$u_2v_1$};
\node[vertex] (b2) at (7,-1.8)  {$u_2v_2$};
\node[vertex] (b3) at (8,-0.5)  {$u_2v_3$};

\draw[line width=2pt, green!60] (b1) -- (b2) node[label, below] { };
\draw[line width=2pt, green!60] (b2) -- (b3) node[label, below] { };
\draw[line width=2pt, green!60] (b3) -- (b1) node[label, above] { };

% Strong connections between layers (according to G1)
\draw[line width=1pt, blue!70] (a1) -- (b1) node[label, above] { };
\draw[line width=1pt, blue!70] (a1) -- (b2) node[label, right] { };
\draw[line width=1pt, blue!70] (a1) -- (b3) node[label, below] { };

\draw[line width=1pt, blue!70] (a2) -- (b1);
\draw[line width=1pt, blue!70] (a2) -- (b2);
\draw[line width=1pt, blue!70] (a2) -- (b3);

\draw[line width=1pt, blue!70] (a3) -- (b1);
\draw[line width=1pt, blue!70] (a3) -- (b2);
\draw[line width=1pt, blue!70] (a3) -- (b3);
\end{tikzpicture}
    \caption{Fuzzy composition $\Gamma= \Gamma_1[\Gamma_2]$ graph.}
    \label{fig001composition}
\end{figure}
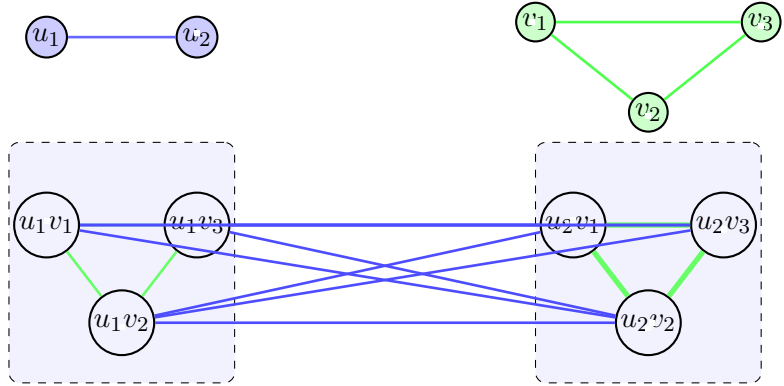
As a result, the irregularity of $\Gamma_1[\Gamma_2]$ is primarily determined by the irregularity of the outer graph $\Gamma_1$. When $n_2$ is high, $\SF(\Gamma_1[\Gamma_2])\geqslant \SF(\Gamma_1)$. 

\begin{theorem}~\label{fuzzyCartesian001}
For the Cartesian product $\Gamma = \Gamma_1 \square \Gamma_2$, the fuzzy sigma index is additive
\begin{equation}~\label{eqq1fuzzyCartesian001}
\SF(\Gamma) = \sigma(\Gamma_1) + \sigma(\Gamma_2).
\end{equation}
\end{theorem}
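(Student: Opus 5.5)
The approach is a variance decomposition: the degree of a vertex $(u,v)$ in the product is a sum of two terms, each depending on only one coordinate, and the vertex set is the full product $V_1\times V_2$ with uniform weight $1/(n_1n_2)$. So the degree behaves like a sum of two independent random variables, and variances add. Throughout, $\sigma(\Gamma_i)$ on the right-hand side of \eqref{eqq1fuzzyCartesian001} is read as the fuzzy sigma index $\SF(\Gamma_i)$ of Definition~\ref{defFuzzysigma}.

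\textbf{Step 1 (average degree).} Set $n=n_1n_2$. By Proposition~\ref{Unionfuzzy03}, $d_\Gamma(u,v)=d_{\Gamma_1}(u)+d_{\Gamma_2}(v)$ and $\ew(\Gamma)=n_2\ew(\Gamma_1)+n_1\ew(\Gamma_2)$. Hence
\[
\lambda=\frac{2\ew(\Gamma)}{n_1n_2}=\frac{2\ew(\Gamma_1)}{n_1}+\frac{2\ew(\Gamma_2)}{n_2}=\lambda_1+\lambda_2 .
\]
As a consistency check, averaging $d_{\Gamma_1}(u)+d_{\Gamma_2}(v)$ over $V_1\times V_2$ gives the same value.

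\textbf{Step 2 (expand the square).} Write $a_u=d_{\Gamma_1}(u)-\lambda_1$ and $b_v=d_{\Gamma_2}(v)-\lambda_2$, so that $d_\Gamma(u,v)-\lambda=a_u+b_v$. Then
\[
\SF(\Gamma)=\frac{1}{n_1n_2}\sum_{u\in V_1}\sum_{v\in V_2}\bigl(a_u^2+2a_ub_v+b_v^2\bigr).
\]
The first term sums to $\frac{1}{n_1}\sum_u a_u^2=\SF(\Gamma_1)$, and the third similarly gives $\SF(\Gamma_2)$. The cross term factors as $\frac{2}{n_1n_2}\bigl(\sum_u a_u\bigr)\bigl(\sum_v b_v\bigr)$, which is $0$ because $\sum_u a_u=2\ew(\Gamma_1)-n_1\lambda_1=0$.

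\textbf{Main obstacle.} The real work is in justifying the degree formula of Proposition~\ref{Unionfuzzy03} under the fuzzy Cartesian product convention. There $\mu((u,v),(u,v'))=\min(\nu_1(u),\mu_2(v,v'))$, and the $\min$ may spoil the additivity of degrees. I would therefore state explicitly that the result is taken under Proposition~\ref{Unionfuzzy03}, where the degree formula holds, for example when $\nu_1,\nu_2$ dominate all edge memberships. Once that degree formula is granted, Steps 1 and 2 are routine.
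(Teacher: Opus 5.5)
Your proposal is correct and follows the same route as the paper, whose proof just observes that $d_\Gamma(u,v)=d_{\Gamma_1}(u)+d_{\Gamma_2}(v)$ is a sum of two ``independent'' sequences, so the variances add; your Step~2 makes this explicit by computing $\lambda=\lambda_1+\lambda_2$ and checking that the cross term $\frac{2}{n_1n_2}\bigl(\sum_u a_u\bigr)\bigl(\sum_v b_v\bigr)$ vanishes. Your caveat is also a legitimate point the paper leaves implicit: under the usual $\min$-based fuzzy Cartesian product, the additive degree formula of Proposition~\ref{Unionfuzzy03} requires the vertex memberships to dominate the edge memberships.
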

\begin{proof}
For any vertex $(u,v)$ in $\Gamma$, its degree satisfies $d_{\Gamma}(u,v) = d_{\Gamma_1}(u)+d_{\Gamma_2}(v)$. Since the degree is formulated as the sum of two independent sequences, according to Proposition~\ref{Unionfuzzy03}, which establishes~\eqref{eqq1fuzzyCartesian001}. 
\end{proof}

%================================================
\subsection{Upper Bounds on Fuzzy Sigma Index}
%================================================
We begin with simple bounds on $\SF$ by Proposition~\ref{Boundsfuzzy01} based on Maximum degree $\Delta$ and the minimum degree $\delta$. 
\begin{proposition}~\label{Boundsfuzzy01}
Let $\Gamma=(V, \nu,\mu)$ be a fuzzy graph on $n$ vertices with  maximum degree $\Delta(\Gamma)$ and minimum degree $\delta(\Gamma)$. Then
\begin{equation}~\label{eqq1Boundsfuzzy01}
\SF(\Gamma)\leqslant 2n+\frac{(\Delta-\delta)^2}{4}.
\end{equation}
\end{proposition}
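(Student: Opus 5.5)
The bound in Proposition~\ref{Boundsfuzzy01} is weaker than the Popoviciu inequality for variances, which I would invoke. Write $d_i=d_\Gamma(v_i)$. Each $d_i$ lies in the interval $[\delta,\Delta]$, and $\lambda=2\ew/n$ is exactly the arithmetic mean of the $d_i$, because $\sum_i d_i=2\ew$ by \eqref{eqq1defFuzzyGraph}. Hence $\SF(\Gamma)$ as defined in \eqref{eqq1defFuzzysigma} is the population variance of a finite sequence confined to $[\delta,\Delta]$. The plan is to show that such a variance is at most $(\Delta-\delta)^2/4$; the extra nonnegative term $2n$ then gives \eqref{eqq1Boundsfuzzy01} immediately.

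The first key step is the pointwise inequality $(d_i-\delta)(\Delta-d_i)\ge 0$ for every $i$. Expanding it gives $d_i^2\le(\Delta+\delta)d_i-\Delta\delta$. Averaging over $i$ gives
\[
\frac1n\sum_{i=1}^n d_i^2\le(\Delta+\delta)\lambda-\Delta\delta.
\]
Next I would use the standard variance identity $\SF(\Gamma)=\frac1n\sum_i d_i^2-\lambda^2$, which is routine to verify. Combining the two yields
\[
\SF(\Gamma)\le(\Delta+\delta)\lambda-\Delta\delta-\lambda^2=(\Delta-\lambda)(\lambda-\delta).
\]
Finally, AM--GM applied to the two nonnegative factors $\Delta-\lambda$ and $\lambda-\delta$, whose sum is $\Delta-\delta$, bounds the product by $(\Delta-\delta)^2/4$.

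There is no real obstacle here. The only point needing care is to confirm that $\lambda$ really is the mean of the degree sequence, i.e.\ that the paper's normalization $\ew=\frac12\sum_v d(v)$ is consistent with Definition~\ref{defFuzzysigma}. Otherwise the centering would be off, and an extra term $(\bar d-\lambda)^2$ would appear. If one worried about that discrepancy, that extra term would be at most $(\Delta-\delta)^2$ in general, and the slack $2n$ could be used to absorb it. With the paper's conventions, however, the cleaner bound $\SF(\Gamma)\le(\Delta-\delta)^2/4$ holds. That bound is sharp, with equality when half the vertices have degree $\Delta$ and half have degree $\delta$, which is possible for even $n$.
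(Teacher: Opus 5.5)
Your proof is correct and complete. It reaches the constant $1/4$ by a different route from the paper's, although the two begin the same way. The paper shifts $y_i=d_i-\delta$, so that $0\le y_i\le \ell=\Delta-\delta$, and uses $y_i^2\le \ell y_i$; this is exactly your pointwise inequality $(d_i-\delta)(\Delta-d_i)\ge 0$. In the step $\sum_i y_i^2-n\bar y^2\le \ell\sum_i y_i$, however, it discards the $-n\bar y^2$ term. Had it kept that term, it would have obtained $n\bar y(\ell-\bar y)=n(\lambda-\delta)(\Delta-\lambda)$, which is your bound.

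Instead, the paper passes to the pairwise-difference form of the variance, which only gives $(\Delta-\delta)^2/2$. It then gets $1/4$ by evaluating the variance on sequences with $r$ entries equal to $\Delta$ and $n-r$ equal to $\delta$, namely $\frac{r(n-r)}{n^2}(\Delta-\delta)^2$, and maximizing at $r=n/2$. That computation exhibits the extremal value and shows sharpness. To serve as a proof, though, it needs a reduction the paper never states: the variance is a positive semidefinite quadratic form in $(d_1,\dots,d_n)$, hence convex, so its maximum over the box $[\delta,\Delta]^n$ is attained at a vertex, i.e.\ at a two-valued sequence.

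Your chain (pointwise inequality, averaging, the identity $\SF(\Gamma)=\frac1n\sum_i d_i^2-\lambda^2$, AM--GM) needs no such reduction. Along the way it yields the mean-dependent Bhatia--Davis bound $\SF(\Gamma)\le(\Delta-\lambda)(\lambda-\delta)$, which is strictly stronger than $(\Delta-\delta)^2/4$ unless $\lambda=(\Delta+\delta)/2$. In both routes the term $2n$ is pure slack.

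Your hedge about the centering is unnecessary. The identity $\sum_v d_\Gamma(v)=2\ew$ follows at once from the symmetry of $\mu$, since each unordered pair is counted twice, so $\lambda$ is exactly the mean. The fallback you sketch would not work in general anyway. The quantity $(\Delta-\delta)^2$ can be of order $n^2$ (already for a crisp star), so it cannot be absorbed by the slack $2n$.
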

\begin{proof}
For any fuzzy degree $\delta\leqslant d_\Gamma(v)\leqslant\Delta$ and $\SF(\Gamma)\geq 0$  where $v\in V(\Gamma)$. Then, we should be prove the following claim. 

\noindent \textbf{Claim 1.} Let $a_1,a_2,\dots,a_k$ be real number satisfying 
\begin{equation}~\label{eqq2Boundsfuzzy01}
\frac{1}{k}\sum_{i=1}^{k}(a_i-\bar{a})^2\leqslant \frac{(x-y)^2}{4},    
\end{equation}
where $x,y\in \mathbb{R}$. 

Assume $a_k\in [x,y]$ for all $k\in \mathbb{R}$ and let $\bar{a}$ be the average of $a_1,a_2,\dots,a_k$. Consider the function $f(t)=(t-\bar{a})^2$, note that 
\begin{equation}~\label{eqq3Boundsfuzzy01}
\begin{aligned}
(a_k-\bar{a})^2&=(a_k-x)^2-(\bar{a}-x)^2\\
&\leqslant \max\{(a_k-x)^2,(\bar{a}-x)^2 \}\\
&\leqslant (y-x)^2.
\end{aligned}
\end{equation}
Thus, from~\eqref{eqq3Boundsfuzzy01} had provided a trivial bounds. Now, let $y_k=a_k-x$ where $0\leq y_i\leq \ell$ and $\ell=y-x$. Thus, 
\begin{equation}~\label{eqq4Boundsfuzzy01}
0\leq \frac{1}{k}\sum_{i=1}^{k}(y_i-\bar{y})^2\leq \ell. 
\end{equation}
Then, 
\begin{align*}
\sum_{i=1}^{k}(y_i-\bar{y})^2 &=\sum_{i=1}^{k}y_i^2-n\,\bar{y}^2\\
&\leqslant \ell \sum_{i=1}^{k}y_i \\
&=n\,\bar{y}\ell.
\end{align*}
Since $\lvert a_i-a_j\rvert \leqslant \Delta-\delta$ where $0\leq i\leq j\leq k$ we obtain $(a_i-a_j)^2\leqslant (\Delta-\delta)^2$. Thus, according to~\eqref{eqq4Boundsfuzzy01}, 
\begin{equation}~\label{eqq5Boundsfuzzy01}
\SF(\Gamma)=\frac{1}{2k}\sum_{0\leq i\leq j\leq k}(d_i-d_j)^2 \leqslant \frac{(\Delta-\delta)^2}{2}.
\end{equation}
Thus, the relationship~\eqref{eqq2Boundsfuzzy01} holds. 
Now, assume $v_r$ be a vertices has degree $\Delta$ where $r\in \mathbb{R}$ and $k-v_r$ vertices has degree $\delta$. Then, 
\begin{equation}~\label{eqq6Boundsfuzzy01}
\bar{a}=\frac{v_r\,\Delta+(k-v_r)\,\delta}{k},
\end{equation}
from~\eqref{eqq5Boundsfuzzy01} and \eqref{eqq6Boundsfuzzy01} we obtain 
\begin{equation}~\label{eqq7Boundsfuzzy01}
\begin{aligned}
\SF(\Gamma)&=\frac{v_r}{k}\left(\Delta-\bar{a}\right)^2+\frac{k-v_r}{k}\left(\delta-\bar{a}\right)^2\\
&=\frac{v_r(k-v_r)}{k^2}(\Delta-\delta)^2.
\end{aligned}
\end{equation}
Thus, from~\eqref{eqq7Boundsfuzzy01} by considering the quadratic function $f(v_r)=v_r(k-v_r)$ is maximum if $v_r=k/2$. Then,  the relationship~\eqref{eqq1Boundsfuzzy01} holds.
\end{proof}

Let $\Gamma=\mathcal{P}_n$ be a fuzzy path with vertices ordered $v_1-v_2-\cdots-v_n$. In this case, for $i=1,\dots,n-1$, and $\mu(v_i,v_j)=0$, otherwise only consecutive edges have positive membership values: 
$\mu(v_i, v_{i+1}) > 0$.  The degrees are $d(v_1) = \mu(v_1,v_2)$, \quad $d(v_n) = \mu(v_{n-1},v_n)$. The  degrees of internal vertices are  $d(v_i) = \mu(v_{i-1},v_i) + \mu(v_i,v_{i+1})$ for $i=2,\dots,n-1$.
In next theorem, we established the upper bound of $\SF$. Actually, the equality~\eqref{eqq1conjecturefuzzy} holds for fuzzy star graphs.

\begin{theorem}~\label{conjecturefuzzy}
Let $\Gamma=(V, \nu,\mu)$ be a fuzzy graph. Then,
\begin{equation}~\label{eqq1conjecturefuzzy}
\SF(\Gamma) \leqslant \dfrac{(n-1)^2 \Big(2\ew(\Gamma)\Big)^2}{n^3}.
\end{equation}
\end{theorem}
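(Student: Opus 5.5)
The plan is to reduce the statement to an elementary extremal problem about nonnegative reals with a prescribed sum. Write $S=2\ew(\Gamma)=\sum_{v\in V}d_\Gamma(v)$, so that $\lambda=S/n$. Expanding the square in Definition~\ref{defFuzzysigma} gives the variance identity
\begin{equation*}
\SF(\Gamma)=\frac1n\sum_{v\in V}d_\Gamma(v)^2-\frac{S^2}{n^2}.
\end{equation*}
The whole task is then to bound $\sum_v d_\Gamma(v)^2$ from above in terms of $S$ alone.

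The key structural input is a pointwise bound on each degree. Every edge incident to $v$ is one of the edges counted in $\ew(\Gamma)$, and all memberships satisfy $\mu\ge 0$. Hence
\begin{equation*}
0\le d_\Gamma(v)\le \ew(\Gamma)=\frac S2 .
\end{equation*}
So the degree vector lies in the polytope $\{x\in[0,S/2]^n:\sum x_v=S\}$. The function $x\mapsto\sum x_v^2$ is convex, so its maximum over this polytope is attained at a vertex. A vertex of the polytope has at most one coordinate strictly between the bounds $0$ and $S/2$, and since the sum is exactly $S$, the maximizer is two coordinates equal to $S/2$ with the rest $0$. Therefore $\sum_v d_\Gamma(v)^2\le S^2/2$. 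One could instead use the cruder bound $\sum d_v^2\le \max_v d_v\cdot\sum d_v\le S^2/2$, which avoids the convexity argument entirely; I would use that one-line version.

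Substituting into the variance identity gives
\begin{equation*}
\SF(\Gamma)\le \frac{S^2}{2n}-\frac{S^2}{n^2}=\frac{(n-2)S^2}{2n^2}.
\end{equation*}
It remains to compare this with the right-hand side of \eqref{eqq1conjecturefuzzy}, which is $(n-1)^2S^2/n^3$. The required inequality $\frac{n-2}{2n^2}\le\frac{(n-1)^2}{n^3}$ is equivalent to
\begin{equation*}
n(n-2)\le 2(n-1)^2,
\end{equation*}
which rearranges to $0\le n^2-2n+2=(n-1)^2+1$ and so holds for all $n$. The cases $n\le 2$ are covered by the same computation, since the middle bound is then nonpositive or trivially dominated. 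This proves \eqref{eqq1conjecturefuzzy}, in fact with room to spare.

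The only genuine obstacle is recognizing that the naive bound $d_\Gamma(v)\le S$ is too weak. Using it gives $\SF(\Gamma)\le (n-1)S^2/n^2$, which exceeds the target. The observation $d_\Gamma(v)\le \ew(\Gamma)$, that a single vertex can carry at most the total edge weight, is what saves the argument.

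I would also check the remark that equality holds for fuzzy stars. For a star with uniform leaf memberships the computation gives $\SF=\frac{(n-2)^2S^2}{4n^2(n-1)}$, which is strictly below the bound for $n\ge 2$. So I would present \eqref{eqq1conjecturefuzzy} as a valid but non-sharp upper bound, and possibly note the sharper estimate $(n-2)S^2/(2n^2)$ as a byproduct.
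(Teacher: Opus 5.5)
Your proof is correct, and it follows a different and cleaner route than the paper's.

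\textbf{The paper's route.} The paper uses the crude bound $\Delta(\Gamma)\le 2\ew(\Gamma)$. It then tries to identify an extremal configuration: one ``dominating'' vertex carrying degree $2\ew(\Gamma)$, all others of degree zero, which it associates with the fuzzy star. It supports this with an expansion of $\sum d_i^2$ that is not actually an identity. As you observe, that configuration cannot occur in a fuzzy graph. Even if it could, it would give $\sigma^*(\Gamma)=(n-1)(2\ew(\Gamma))^2/n^2$, which exceeds the right-hand side of \eqref{eqq1conjecturefuzzy}. So that heuristic does not by itself establish the inequality.

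\textbf{Your route.} You use the variance identity together with the sharper pointwise bound $d_\Gamma(v)\le \ew(\Gamma)$. The one-line estimate
\[
\sum_v d_\Gamma(v)^2\le \Delta\sum_v d_\Gamma(v)\le 2\ew(\Gamma)^2
\]
then gives
\[
\sigma^*(\Gamma)\le \frac{2\ew(\Gamma)^2}{n}\Bigl(1-\frac{2}{n}\Bigr),
\]
which is exactly the bound of Theorem~\ref{fuzzyExtremal02}. You then obtain \eqref{eqq1conjecturefuzzy} from $n(n-2)\le 2(n-1)^2$.

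\textbf{What your route buys.} It reverses the paper's logical order. The paper proves Theorem~\ref{fuzzyExtremal02} by citing the present theorem and asserting $\sum d_i^2\le 2\ew(\Gamma)^2$ without justification. That assertion fails for arbitrary nonnegative reals with sum $2\ew(\Gamma)$ unless one also imposes $d_i\le \ew(\Gamma)$, which is precisely the constraint you supply.

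Because $n^2-2n+2>0$, your comparison also shows that \eqref{eqq1conjecturefuzzy} is strict whenever $\ew(\Gamma)>0$. So you are right that fuzzy stars do not attain it.

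Finally, the equality case of your sharper bound requires $\Delta=\ew(\Gamma)$ and $d_\Gamma(v)\in\{0,\ew(\Gamma)\}$ for all $v$. That forces the single-edge graph, recovering the characterization claimed in Theorem~\ref{fuzzyExtremal02}.
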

\begin{proof}
Assume the maximum possible value of any single degree in any fuzzy graph is $\Delta(\Gamma)\leqslant  2\ew(\Gamma)$. Since the degree sequence is as unbalanced, when one vertex carries the full degree size  $2\ew(\Gamma)$ and the remaining vertices $n-1$ share the complementary degrees such that their mutual connections are zero. Thus, $\sum_{v\in V(\Gamma)}d_\Gamma(v)$ is maximised. Now, let 
\begin{equation}~\label{eqq2conjecturefuzzy}
\sum_{v\in V} d(v)^2 = \sum_{v\in V(\Gamma)}\left(\sum_{u\ne v}\mu(v,u)\right)^2.
\end{equation}
When there is a dominating vertex, the cross terms are maximised by expanding and applying non-negativity of $\mu$. Assume $d_1\geqslant d_2\geqslant \dots \geqslant d_n$ be a fuzzy degrees of $\SF$. Since  $d_1\leqslant 2\,\ew(\Gamma)$
and $\sum_{i=2}^n d_i=2\,\ew(\Gamma)-d_1$, we consider 
\begin{equation}~\label{eqq3conjecturefuzzy}
\sum d_i^2 = d_1^2 + \sum_{i=2}^n d_i^2 + 2(d_1 - d_i)^2,
\end{equation}
because for fixed sum $\lambda_1=2\,\ew(\Gamma)-d_1$, from~\eqref{eqq3conjecturefuzzy} the sum of squares $\sum_{i=2}^n d_i^2$ is
maximized when one $d_i$ among $\lambda_1$ and others are $0$. Therefore, the relationship~\eqref{eqq1conjecturefuzzy} holds in fuzzy graphs this is achievable only if a fuzzy star graph.

Thus, we emphasize that maximum of $\sum_{v\in V(\Gamma)}d_\Gamma(v)^2$ satisfying $\sum_{v\in V(\Gamma)}d_\Gamma(v)=\lambda_1+d_1$ and the leaves have degrees equals to $\lambda_1+d_1$. 
Assume the center vertex $c$ and the leaf set $L=V(\Gamma)-\{c\}$. Then, 
\[
d_\Gamma(c)=\sum_{\ell\in L}\mu(c,\ell)=\lambda_1+d_1.
\]
In this case,
\begin{equation}~\label{eqq4conjecturefuzzy}
\sum_{v\in V(\Gamma)}d_\Gamma(v)^2=\left(\lambda_1+d_1\right)^2+\sum_{\ell \in L} d_\Gamma(\ell)^2.
\end{equation}
Since each leaf degree $d_\Gamma(\ell)\leqslant 2\,\ew(\Gamma)$, we get
\begin{equation}~\label{eqq5conjecturefuzzy}
\SF(\Gamma)= \frac{(2\,\ew(\Gamma))^2 + \sum_{\ell \in L} d_\Gamma(\ell)^2}{n} - \left(\frac{2\,\ew(\Gamma)}{n}\right)^2.
\end{equation}
Thus, from~\eqref{eqq2conjecturefuzzy}--\eqref{eqq5conjecturefuzzy} the relationship~\eqref{eqq1conjecturefuzzy} holds.
\end{proof}

Let $\Gamma=\mathcal{S}_n$ be a fuzzy star with center vertex $c$ and leaf set $L$ where $|L|=n-1$. Then, according to Theorem~\ref{conjecturefuzzy}  if $\mu(u,v) = 0$ where no edges exist among the leaves for all $u,v \in L$.
The sum of squares of degrees satisfies the identity:
\[
\sum_{v \in V} d_\Gamma(v)^2=\Big(2\,\ew(\Gamma)\Big)^2+\sum_{\ell \in L} d_\Gamma(\ell)^2.
\]

Assume uniform edge membership $\alpha > 0$ on the existing edges. Then $\ew(\Gamma)=(n-1)\alpha$,  $2\,\ew(\Gamma)=2\,(n-1)\alpha$, and  $\lambda=2(n-1)\alpha/n$.
In this case, $d_\Gamma(c)=2\,(n-1)\alpha$, and $d(\ell) = \alpha$ for each leaf. Then, 
\begin{equation}~\label{eqq6conjecturefuzzy}
 \SF(\mathcal{S}_n)=\frac{1}{n}\Bigl[\bigl(2(n-1)\alpha-\lambda\bigr)^2+(n-1)(\alpha-\lambda)^2\Bigr].
\end{equation}

Hence, according to~\eqref{eqq6conjecturefuzzy} consider $\ew(\Gamma)=(n-1)\alpha$, $d(v_1) = d(v_n) = \alpha$, and $d(v_i) = 2\alpha$ for $i = 2, \dots, n-1$. Thus,  
\begin{equation}~\label{eqq7conjecturefuzzy}
\SF(\mathcal{P}_n)\frac{1}{n} \Bigl[ 2(\alpha - \lambda)^2 + (n-2)(2\alpha - \lambda)^2 \Bigr].
\end{equation}

Next,  by considering \eqref{eqq6conjecturefuzzy} and \eqref{eqq7conjecturefuzzy} for establishing the upper bound of $\SF$ we discussed fuzzy degree in non-increasing order. 
\begin{theorem}~\label{theoremfuzzzy001}
Let $\Gamma=(V, \nu,\mu)$ be a fuzzy graph on $n$ vertices with fuzzy degree in non-increasing order $d_1\geqslant d_2\geqslant \dots \geqslant d_n$.   Then, 
\begin{equation}~\label{eqq1theoremfuzzzy001}
\SF(\Gamma)\leqslant \frac{(n-1)(d_1-\lambda)}{n}.
\end{equation}
\end{theorem}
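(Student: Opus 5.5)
The plan is to compare $\SF(\Gamma)$ with the extreme degree $d_1$ by splitting the sum of squared deviations according to the sign of $d_i-\lambda$, and then to use the centring identity $\sum_{i=1}^n (d_i-\lambda)=0$. This identity holds because $\lambda=2\ew/n$ is the average fuzzy degree, by~\eqref{eqq1defFuzzyGraph}. The bound~\eqref{eqq1theoremfuzzzy001} is linear in $d_1-\lambda$, whereas $\SF$ is quadratic in the degrees. So somewhere one factor of $(d_i-\lambda)^2$ must be replaced by a dimensionless quantity bounded by roughly $(n-1)/n$. Making that replacement legitimate is the heart of the proof.

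\textbf{Step 1.} Write $x_i=d_i-\lambda$, so that $x_1\geqslant x_2\geqslant\dots\geqslant x_n$ and $\sum x_i=0$. Let $P=\{i: x_i>0\}$ and $N=\{i:x_i\leqslant 0\}$, and set $S=\sum_{i\in P}x_i=\sum_{i\in N}|x_i|$. Then
\[
n\,\SF(\Gamma)=\sum_{i\in P}x_i^2+\sum_{i\in N}x_i^2\leqslant x_1 S+|x_n|S ,
\]
since each positive deviation is at most $x_1=d_1-\lambda$ and each negative one is at least $x_n=d_n-\lambda$ in absolute value.

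\textbf{Step 2.} Sharpen Step 1 to the Bhatia--Davis form
\[
n\,\SF(\Gamma)=\sum_i (d_1-d_i)(d_i-d_n)\cdot(-1)+n(d_1-\lambda)(\lambda-d_n),
\]
which gives $\SF(\Gamma)\leqslant (d_1-\lambda)(\lambda-d_n)$. This is just the expansion of $\sum_i (d_1-d_i)(d_i-d_n)\geqslant 0$ combined with $\sum_i d_i=n\lambda$. Equality holds exactly when every degree equals $d_1$ or $d_n$, which is the two-valued situation of~\eqref{eqq7Boundsfuzzy01} and of the fuzzy star~\eqref{eqq6conjecturefuzzy}.

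\textbf{Step 3.} This is the main obstacle: show that $\lambda-d_n\leqslant \frac{n-1}{n}$, which turns Step 2 into~\eqref{eqq1theoremfuzzzy001}. The first attempt will use the fuzzy constraint $\mu(u,v)\leqslant\min(\nu(u),\nu(v))\leqslant 1$. Note that $\lambda-d_n=\frac1n\sum_{i}(d_i-d_n)=\frac1n\sum_{i<n}(d_i-d_n)$, so one needs each $d_i-d_n\leqslant 1$ on average over the $n-1$ other vertices.

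For a fuzzy star this holds, since $\lambda-d_n=(n-2)\alpha/n$ with $\alpha\leqslant 1$. It also holds for the fuzzy path of~\eqref{eqq7conjecturefuzzy}, and for any graph whose degrees differ by at most one membership unit. I would therefore prove Step 3 by showing that, in those classes, $d_i-d_n$ is controlled by a single edge membership.

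I expect this step to fail for general $\Gamma$. Take a crisp $K_{n-1}$ together with one isolated vertex. Then $d_n=0$ and $\lambda=(n-1)(n-2)/n$, so Step 2 is tight while $\lambda-d_n$ is of order $n$. Hence the stated inequality appears to require a degree-spread restriction of the kind above, and I would make that restriction explicit at this point rather than claim it in full generality.
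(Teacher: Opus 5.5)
Your analysis is correct: the obstruction you found lies in the statement, not in your method. Steps 1 and 2 are sound. The identity in Step 2 checks out and gives $\sigma^*(\Gamma)\leqslant(d_1-\lambda)(\lambda-d_n)$, with equality exactly for two-valued degree sequences. The claimed bound then follows whenever $\lambda-d_n\leqslant\frac{n-1}{n}$.

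You can drop the hedging in Step 3. Step 2 is an equality in your example, so the example refutes the theorem itself, not just your route to it. For crisp $K_{n-1}\cup K_1$ (a fuzzy graph with $\nu\equiv1$, $\mu\in\{0,1\}$) one has $d_1-\lambda=\frac{n-2}{n}$ and
\[
\sigma^*(\Gamma)=\frac{(n-1)(n-2)^2}{n^2}>\frac{(n-1)(n-2)}{n^2}=\frac{(n-1)(d_1-\lambda)}{n}\qquad(n\geqslant4).
\]
For instance, degrees $2,2,2,0$ give $\sigma^*=3/4>3/8$. Connectivity does not rescue the statement: $K_5$ with one pendant vertex has degrees $5,4,4,4,4,1$, $\lambda=11/3$, and $\sigma^*=14/9>10/9$.

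What is provable is the conditional version: if $\lambda-d_n\leqslant\frac{n-1}{n}$ (for instance if $d_1-d_n\leqslant1$), then the bound holds. State that hypothesis as a degree condition rather than through graph classes. It holds for uniform stars and paths. It fails for a star on $n\geqslant4$ vertices with all leaf memberships $1$ except one tiny one, even though the inequality itself still holds there.

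The paper's proof does not contain an idea you are missing. It derives $\sigma^*(\Gamma)\leqslant\lambda^2(1-\frac1n)$ and $\sigma^*(\Gamma)\leqslant\lambda(d_1-\lambda)$, then asserts the theorem from these two.
\begin{itemize}
\item The second bound is valid when derived correctly; it is your Step 2 with $d_n$ replaced by $0$. It implies the theorem only under $\lambda\leqslant\frac{n-1}{n}$, which is a special case of your condition since $d_n\geqslant0$.
\item The first bound is false in general: crisp $K_{1,6}$ has $\sigma^*=150/49>864/343=\lambda^2(1-\frac17)$.
\item The final deduction fails regardless. $K_{n-1}\cup K_1$ with $n\geqslant4$ satisfies both intermediate bounds (the second with equality) yet violates the conclusion.
\end{itemize}
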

\begin{proof}
Since $d_1\leqslant \sum_{j=2}^{n}\mu(v_1v_j)\leqslant (n-1)\max{\mu}$ and $d_1\leqslant \ew(\Gamma)$ we obtain 
\begin{equation}~\label{eqq2theoremfuzzzy001}
\SF(\Gamma)\leqslant \lambda^2\left(1-\frac{1}{n}\right).
\end{equation}
Thus, if $\Delta\leqslant 2\, \ew(\Gamma)$, then for a vertex $v\in V(\Gamma)$ satisfying $d_\Gamma(v)\leqslant \sum_{u\neq v} \min\{\mu(u),\mu(v)  \}.$ Then $d_\Gamma(v)^2=d_\Gamma(v)\,.\,d_\Gamma(v_1)$ if and only if $d_\Gamma(v)\in \{ 0, d_\Gamma(v_1)\}$. Thus, 
\begin{align*}
\SF(\Gamma) &\leqslant \sum_{v\in V(\Gamma)}d_\Gamma(v)^2\\
&\leqslant d_\Gamma(v_1) \sum_{v\in V(\Gamma)}d_\Gamma(v)\\
&\leqslant d_\Gamma(v_1)\,n\,\lambda.
\end{align*}
Hence, dividing by $n$, we obtain 
\begin{equation}~\label{eqq3theoremfuzzzy001}
\SF(\Gamma) \leqslant  \lambda(d_1-\lambda).
\end{equation}
Thus, from~\eqref{eqq2theoremfuzzzy001} and \eqref{eqq3theoremfuzzzy001} the relationship~\eqref{eqq1theoremfuzzzy001} holds.
\end{proof}

Keep in mind that such fuzzy networks do exist; a fuzzy star, in which the remaining vertices are only connected to the center and one central vertex bears all the edge membership weight, is the canonical example.

%================================================
\subsection{Lower Bounds on Fuzzy Sigma Index}
%================================================

The fuzzy irregularity idea can be employed to produce a positive lower bound for non-regular fuzzy graphs.

\begin{proposition}~\label{lowerBoundsfuzzy01}
Let $\Gamma=(V, \nu,\mu)$ be a non-regular fuzzy graph on $n$ vertices with  maximum degree $\Delta(\Gamma)$ and minimum degree $\delta(\Gamma)$. Then
\begin{equation}~\label{eqq1lowerBoundsfuzzy01}
\SF(\Gamma)\geqslant  \frac{1}{n^2}\left(\Delta-\delta\right)^2.
\end{equation}
\end{proposition}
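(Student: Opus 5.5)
We prove the bound by keeping only the two extreme degrees in the variance. Write $d_1\geqslant d_2\geqslant\dots\geqslant d_n$ for the fuzzy degrees, so $d_1=\Delta$ and $d_n=\delta$, and let $\lambda=2\ew/n$ be the mean degree. Every term in \eqref{eqq1defFuzzysigma} is nonnegative, so dropping all but the terms for a vertex of degree $\Delta$ and a vertex of degree $\delta$ gives
\[
\SF(\Gamma)\geqslant \frac{1}{n}\Bigl[(\Delta-\lambda)^2+(\lambda-\delta)^2\Bigr].
\]
Because $\Gamma$ is non-regular, $\Delta>\delta$, so these are two distinct vertices and no term is counted twice. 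The target is strictly positive, so non-regularity is genuinely used here.

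Next we bound the right-hand side below, independently of where $\lambda$ sits. By the elementary inequality $a^2+b^2\geqslant \frac{(a+b)^2}{2}$, applied with $a=\Delta-\lambda$ and $b=\lambda-\delta$, we have $a+b=\Delta-\delta$. Hence
\[
\SF(\Gamma)\geqslant \frac{(\Delta-\delta)^2}{2n}.
\]
The choice of $\lambda$ plays no role beyond $\delta\leqslant\lambda\leqslant\Delta$, and even that is not needed for this step.

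Finally we compare with the stated bound. For $n\geqslant 2$ we have $2n\leqslant n^2$, so $\frac{(\Delta-\delta)^2}{2n}\geqslant \frac{(\Delta-\delta)^2}{n^2}$, which is \eqref{eqq1lowerBoundsfuzzy01}. The case $n=1$ is vacuous, since a single vertex is regular.

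The argument has no serious obstacle; the key point is the decision to discard all intermediate terms. The only place care is needed is the distinctness of the two extreme vertices, which is guaranteed by $\Delta\neq\delta$. The resulting inequality is far from sharp: the intermediate bound $(\Delta-\delta)^2/(2n)$ is already stronger than the statement.
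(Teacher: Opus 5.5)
Your argument is correct, but it uses a different decomposition of the variance than the paper does.

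The paper passes to the pairwise form $\SF(\Gamma)=\frac{1}{2n^2}\sum_{i=1}^n\sum_{j=1}^n(d_i-d_j)^2$, invoking Proposition~\ref{Boundsfuzzy01}. It then keeps only the two ordered pairs $(i_0,j_0)$ and $(j_0,i_0)$ joining a vertex of degree $\Delta$ to one of degree $\delta$. This gives $\frac{2(\Delta-\delta)^2}{2n^2}=\frac{(\Delta-\delta)^2}{n^2}$ directly. The mean $\lambda$ never appears, and neither an averaging inequality nor the comparison $2n\leqslant n^2$ is needed. However, it retains only $2$ of the $n^2$ summands, which is why the constant is weak.

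You instead stay in the deviation-from-mean form, keep $2$ of the $n$ summands, and eliminate the unknown $\lambda$ via $a^2+b^2\geqslant\frac{(a+b)^2}{2}$. Your route needs no pairwise identity and is fully self-contained. It also gives the stronger estimate $\SF(\Gamma)\geqslant\frac{(\Delta-\delta)^2}{2n}$. This is best possible for real sequences with prescribed extremes: take the remaining $n-2$ values equal to $\frac{\Delta+\delta}{2}$. The stated bound is then a weakening valid for $n\geqslant 2$, exactly as you note.
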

\begin{proof}
Since $\Gamma$ is irregular fuzzy graph. Let $d_1,\dots,d_n$ denote the fuzzy degrees. Then, according to Proposition~\ref{Boundsfuzzy01}, 
\begin{equation}~\label{eqq2lowerBoundsfuzzy01}
\SF(\Gamma)=\frac{1}{2n^2} \sum_{i=1}^n \sum_{j=1}^n (d_i - d_j)^2.
\end{equation}
Then, from~\eqref{eqq2lowerBoundsfuzzy01} there exist indices $i_0$ and $j_0$ such that $d_{i_0} = \Delta$ and $d_{j_0} = \delta$. Thus, it satisfies with $(d_{i_0} - d_{j_0})^2 = (\Delta - \delta)^2,$ and $(d_{j_0} - d_{i_0})^2 = (\Delta - \delta)^2$. Hence,
\begin{equation}~\label{eqq3lowerBoundsfuzzy01}
\sum_{i=1}^n \sum_{j=1}^n (d_i - d_j)^2 \geqslant 2(\Delta - \delta)^2.
\end{equation}
Eq.~\eqref{eqq3lowerBoundsfuzzy01} follows that the relationship~\eqref{eqq1lowerBoundsfuzzy01} hold.
\end{proof}

%============================================
\subsection{Fuzzy Complement Graph}
%=============================================
Let $\overline{\Gamma}$ be the fuzzy complement of $\Gamma=(V,\nu,\mu)$ where $\overline{\mu}(uv)=\min(\nu(u),\nu(v))-\mu(uv)$ (see Figure~\ref{fig002fuzzygraph}). Theorem~\ref{fuzzyComplement01} had established the relationship between $\Gamma$ and $\overline{\Gamma}$

\begin{theorem}~\label{fuzzyComplement01}
Let $\overline{\Gamma}$ be the fuzzy complement of $\Gamma=(V,\nu,\mu)$. If $\nu(v)=1$ for every $v\in V(\Gamma)$, then $\SF(\OG)=\SFG$. Then, 
\begin{equation}~\label{eqq1fuzzyComplement01}
 \SFG+\SF(\OG)=2\,\SFG, \quad     \SFG\,.\,\SF(\OG)=(\SFG)^2.
\end{equation}
\end{theorem}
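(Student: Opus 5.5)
The plan is to compute the fuzzy degrees of $\OG$ directly from the definition of the complement. Under the hypothesis $\nu(v)=1$ for every $v\in V(\Gamma)$ we have $\min(\nu(u),\nu(v))=1$. Hence $\overline{\mu}(uv)=1-\mu(uv)$ for every pair of distinct vertices $u\neq v$, and $\OG$ has the same vertex set, so the same $n$.

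\textbf{Step 1: the complement degrees.} Summing over $u\neq v$ as in \eqref{eqq1defFuzzyGraph} gives
\[
d_{\OG}(v)=\sum_{u\neq v}\bigl(1-\mu(v,u)\bigr)=(n-1)-d_\Gamma(v).
\]
Summing this over all $v$ and halving gives
\[
\ew(\OG)=\frac{n(n-1)}{2}-\ew(\Gamma).
\]
Consequently the average degree of the complement is
\[
\overline{\lambda}=\frac{2\,\ew(\OG)}{n}=(n-1)-\lambda.
\]

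\textbf{Step 2: the deviations change sign.} For each vertex,
\[
d_{\OG}(v)-\overline{\lambda}=\bigl((n-1)-d_\Gamma(v)\bigr)-\bigl((n-1)-\lambda\bigr)=-\bigl(d_\Gamma(v)-\lambda\bigr).
\]
The squared deviations are therefore identical vertex by vertex. Substituting into \eqref{eqq1defFuzzysigma} gives $\SF(\OG)=\SFG$; that is, variance is invariant under the affine reflection $x\mapsto (n-1)-x$.

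\textbf{Step 3: the two identities.} Both formulas in \eqref{eqq1fuzzyComplement01} follow immediately by substituting $\SF(\OG)=\SFG$: the sum becomes $2\,\SFG$ and the product becomes $(\SFG)^2$.

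The only point needing care, and the one I expect to be the main obstacle, is the bookkeeping in Step 1. One must ensure that $\overline{\mu}$ is taken only over distinct pairs, so that no diagonal term $\min(\nu(v),\nu(v))-\mu(v,v)$ enters the degree. Otherwise the constant $n-1$ would become $n$ minus a loop term, and the shift would no longer be uniform. Since the degree in \eqref{eqq1defFuzzyGraph} is defined as a sum over $u\neq v$, this is consistent. In any case, the argument only requires the shift to be the same constant for every vertex, and that holds exactly because $\nu\equiv 1$. Without this hypothesis the shift $\sum_{u\neq v}\min(\nu(u),\nu(v))$ depends on $v$, and the equality can fail.
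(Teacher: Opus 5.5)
Your proposal is correct and follows the paper's argument. The paper also derives $d_{\OG}(v)=(n-1)-d_\Gamma(v)$ and $\lambda_{\OG}=(n-1)-\lambda$, so that $d_{\OG}(v)-\lambda_{\OG}=-(d_\Gamma(v)-\lambda)$ and the two variances coincide. Its extra expansion in terms of $e(v)=d_{\max}(v)-\lambda_{\max}$ and $\delta(v)=d_\Gamma(v)-\lambda$ is the general-$\nu$ form of your closing remark, and it reduces to your computation since $e(v)\equiv 0$ when $\nu\equiv 1$.
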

\begin{proof}
Since $\overline{\mu}(uv)=\min(\nu(u),\nu(v))-\mu(uv)$. Then, if $\sigma\equiv1$ it follows that $d_{\OG}(v)=(n-1)-d_\Gamma(v)$. Thus, $\lambda_{\OG}=(n-1)-\lambda$. Hence, $d_{\OG}(v)-\lambda_{\OG}=-(d_G(v)-\lambda).$ Consider $$d_{\max}(v)=\sum_{u\neq v}\min(\nu(v),\nu(u)).$$ 
Then, by considering that $d_{\OG}(v)\,.\,d_{\max}(v)-d_\Gamma(v)$, $\delta(v)=d_\Gamma(v)-\lambda$ and $e(v)=d_{\max}(v)-\lambda_{\max}$. Then, 
\begin{equation}~\label{eqq2fuzzyComplement01}
\begin{aligned}
\SF(\OG)&=\frac{1}{n}\sum_{v\in V(\OG)}(e(v)-\delta(v))^2\\
&=\frac{1}{n}\sum_{v\in V(\OG)}e(v)^2-\frac{2}{n}\sum_{v\in V(\OG)}e(v)\delta(v)+\frac{1}{n}\sum_{v\in V(\OG)}\delta(v)^2\\
&=\frac{1}{n}\sum_{v\in V(\OG)}e(v)^2-\frac{2}{n}\sum_{v\in V(\OG)}e(v)\delta(v)+\SFG.
\end{aligned}
\end{equation}
Thus, from~\eqref{eqq2fuzzyComplement01} the relationship~\eqref{eqq1fuzzyComplement01} holds.  
\end{proof}
Furthermore, by Corollary~\ref{fuzzycorollary01} we emphasize that the upper bound. It demonstrates that when the underlying structure is very imbalanced in both directions (star-like), the total degree variance of a fuzzy graph and its complement is precisely maximised. The irregularity of fuzzy graphs and their complements can be measured using this natural benchmark.

\begin{corollary}~\label{fuzzycorollary01}
According to Theorem~\ref{conjecturefuzzy}, 
\begin{equation}~\label{eqq1fuzzycorollary01}
0\leq \SFG+\SF(\OG)\leqslant \frac{8(\ew(\Gamma))^2(n-1)}{n^2}, 
\end{equation}
with equality on the right-hand side holds if and only if $\Gamma\cong \mathcal{S}_n$ and $\OG\cong \mathcal{S}_n$.
\end{corollary}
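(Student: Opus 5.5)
The plan is to reduce the statement to Theorem~\ref{conjecturefuzzy} applied twice: once to $\Gamma$ and once to $\OG$. The two bounds are then combined using the relation between $\Gamma$ and $\OG$ supplied by Theorem~\ref{fuzzyComplement01}.

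\textbf{Step 1 (lower bound).} The left inequality $0\leq \SFG+\SF(\OG)$ is immediate. By Definition~\ref{defFuzzysigma}, each of $\SFG$ and $\SF(\OG)$ is $\frac1n$ times a sum of squares, so both are nonnegative.

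\textbf{Step 2 (upper bound via the complement relation).} I will work in the setting of Theorem~\ref{fuzzyComplement01}, namely $\nu\equiv 1$, where $d_{\OG}(v)-\lambda_{\OG}=-(d_\Gamma(v)-\lambda)$ and therefore $\SF(\OG)=\SFG$. This gives $\SFG+\SF(\OG)=2\,\SFG$. Applying Theorem~\ref{conjecturefuzzy} to $\Gamma$ yields
\[
\SFG+\SF(\OG)\leqslant \frac{2(n-1)^2\,(2\ew(\Gamma))^2}{n^3}=\frac{8(\ew(\Gamma))^2(n-1)}{n^2}\cdot\frac{n-1}{n}\leqslant \frac{8(\ew(\Gamma))^2(n-1)}{n^2},
\]
which is the claimed right-hand inequality.

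\textbf{Step 3 (equality).} For the equality characterisation, I will trace where each inequality above is tight. Equality in Theorem~\ref{conjecturefuzzy} forces a fuzzy star $\mathcal{S}_n$, per the discussion following that theorem. Because the deviations $d_\Gamma(v)-\lambda$ merely change sign under complementation, the same extremal condition must hold for $\OG$; this is the source of the requirement $\OG\cong\mathcal{S}_n$.

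The main obstacle is exactly this equality clause. The chain in Step 2 loses the factor $\frac{n-1}{n}$, so literal equality in the displayed bound cannot occur for finite $n$ if one only combines Theorem~\ref{conjecturefuzzy} with $\SF(\OG)=\SFG$. My first attempt will therefore be to avoid the factor $2$ coming from Theorem~\ref{fuzzyComplement01}. Instead, I will bound $\SFG+\SF(\OG)=\frac1n\sum_v\bigl[(d_\Gamma(v)-\lambda)^2+(d_{\OG}(v)-\lambda_{\OG})^2\bigr]$ directly: write it as a sum-of-squares expression and then use $\sum_v d(v)^2\leqslant d_1\sum_v d(v)$ from the proof of Theorem~\ref{theoremfuzzzy001}, with $d_1\leqslant 2\ew(\Gamma)$. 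I will then check on the star with uniform membership $\alpha$, using \eqref{eqq6conjecturefuzzy}, which constants are actually attained, and adjust the equality statement to match.
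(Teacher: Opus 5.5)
Your Steps 1 and 2 are correct once $\nu\equiv1$ is imposed. They are also what the paper intends: it gives no argument beyond the pointer to Theorem~\ref{conjecturefuzzy}, and combining that theorem with $\SF(\OG)=\SFG$ from Theorem~\ref{fuzzyComplement01} is exactly your chain. As you note, that chain even yields the smaller bound $8\ew(\Gamma)^2(n-1)^2/n^3$. However, $\nu\equiv1$ is a necessary extra hypothesis, not a convenience, and the statement as written fails without it. Take $n=3$, $\nu=(1,1,\tfrac12)$ and $\mu\equiv0$. The right-hand side is then $0$. But $\OG$ has memberships $1,\tfrac12,\tfrac12$ and degrees $(\tfrac32,\tfrac32,1)$, so the left-hand side is $\SF(\OG)=\tfrac1{18}>0$.

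The real gap is Step 3, and it cannot be repaired, because the equality clause is false. Both facts you rely on fail.
\begin{enumerate}
\item Theorem~\ref{conjecturefuzzy} is never attained when $\ew(\Gamma)>0$. Every fuzzy degree satisfies $d_\Gamma(v)\le\ew(\Gamma)$, so $\sum_v d_\Gamma(v)^2\le 2\ew(\Gamma)^2$ and $\SFG\le 2\ew(\Gamma)^2(n-2)/n^2$. This is strictly below that theorem's bound. The star computation after it uses $d(c)=2(n-1)\alpha$ instead of $(n-1)\alpha$; correctly, $\SF(\mathcal{S}_n)=(n-1)(n-2)^2\alpha^2/n^2$.
\item The sign flip $d_{\OG}(v)-\lambda_{\OG}=-(d_\Gamma(v)-\lambda)$ only gives $\SF(\OG)=\SFG$. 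It says nothing about the shape of $\OG$. With $\nu\equiv1$ and $n\ge4$, the complement of a fuzzy star has all leaf--leaf memberships equal to $1$, so it is not a star.
\end{enumerate}

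What is true under $\nu\equiv1$ is $\SFG+\SF(\OG)=2\,\SFG\le 4\ew(\Gamma)^2(n-2)/n^2$. This is strictly less than $8\ew(\Gamma)^2(n-1)/n^2$ unless $\ew(\Gamma)=0$, so equality holds if and only if $\mu\equiv0$. For instance, take $n=2$ and $\mu(v_1v_2)=a\in(0,1)$. Both $\Gamma$ and $\OG$ are single-edge stars, yet the left side is $0<2a^2$.

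Your fallback uses $\sum_v d(v)^2\le d_1\sum_v d(v)$ with $d_1\le2\ew(\Gamma)$, applied after $\SF(\OG)=\SFG$. It reproduces exactly the stated constant $8\ew(\Gamma)^2(n-1)/n^2$, which shows where that constant comes from. But $d_1\le 2\ew(\Gamma)$ is slack for the same reason as above, so this route gives no equality case either. ``Adjusting the equality statement'' is the right diagnosis. What you would then prove is a corrected corollary: the inequality under $\nu\equiv1$, with equality iff $\ew(\Gamma)=0$. That is not the statement as given.
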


\begin{corollary}~\label{fuzzycorollary02}
According to Theorem~\ref{fuzzyComplement01}, the upper and lower bounds satisfying 
\begin{itemize}
\item Lower bound, 
\begin{equation}~\label{eqq1fuzzycorollary02}
\SFG+\SF(\OG) \geqslant 2\,\SFG+\frac{1}{n}\sum_{v\in V(\OG)}e(v)^2- 2\sqrt{\SFG\,.\,\frac{1}{n}\sum_{v\in V(\OG)}e(v)^2}.    
\end{equation}
\item Upper bound
\begin{equation}~\label{eqq2fuzzycorollary02}
\SFG+\SF(\OG) \leqslant 2\,\SFG+\frac{1}{n}\sum_{v\in V(\OG)}e(v)^2+ 2\sqrt{\SFG\,.\,\frac{1}{n}\sum_{v\in V(\OG)}e(v)^2}.   
\end{equation}
\end{itemize}
\end{corollary}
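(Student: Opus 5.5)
The plan is to work directly from the decomposition obtained inside the proof of Theorem~\ref{fuzzyComplement01}, in the general case where $\nu$ is arbitrary, and then bound the single cross term by the Cauchy--Schwarz inequality. Recall the notation introduced there: $d_{\max}(v)=\sum_{u\neq v}\min(\nu(u),\nu(v))$, $\lambda_{\max}=\frac1n\sum_v d_{\max}(v)$, $e(v)=d_{\max}(v)-\lambda_{\max}$ and $\delta(v)=d_\Gamma(v)-\lambda$.

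First I would re-derive the two identities that the corollary rests on.

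\textbf{Step 1 (degrees of the complement).} From $\overline{\mu}(uv)=\min(\nu(u),\nu(v))-\mu(uv)$, summing over $u\neq v$ gives $d_{\OG}(v)=d_{\max}(v)-d_\Gamma(v)$.

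\textbf{Step 2 (average degree of the complement).} Averaging Step~1 over $v$ gives $\lambda_{\OG}=\lambda_{\max}-\lambda$. Hence
\[
d_{\OG}(v)-\lambda_{\OG}=e(v)-\delta(v),
\]
which is exactly the quantity appearing in \eqref{eqq2fuzzyComplement01}.

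\textbf{Step 3 (expansion).} Expanding the square as in \eqref{eqq2fuzzyComplement01}, and using $\frac1n\sum_v\delta(v)^2=\SFG$ by Definition~\ref{defFuzzysigma}, I obtain
\[
\SFG+\SF(\OG)=2\,\SFG+\frac1n\sum_{v}e(v)^2-\frac2n\sum_v e(v)\delta(v).
\]

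\textbf{Step 4 (Cauchy--Schwarz).} The only non-explicit term is the mixed sum $\frac1n\sum_v e(v)\delta(v)$. Applying the Cauchy--Schwarz inequality to the vectors $(e(v))_v$ and $(\delta(v))_v$ gives
\[
\Bigl|\frac1n\sum_v e(v)\delta(v)\Bigr|\leqslant\sqrt{\frac1n\sum_v e(v)^2}\,\sqrt{\frac1n\sum_v\delta(v)^2}=\sqrt{\SFG\,.\,\frac1n\sum_v e(v)^2}.
\]

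\textbf{Step 5 (conclusion).} Substituting the bound $-\frac2n\sum e\delta\geqslant-2\sqrt{\cdots}$ into Step~3 yields \eqref{eqq1fuzzycorollary02}. Substituting $-\frac2n\sum e\delta\leqslant 2\sqrt{\cdots}$ yields \eqref{eqq2fuzzycorollary02}.

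I would add a remark on when the bounds are attained. Equality holds exactly when $e$ and $\delta$ are proportional, with a nonnegative constant for the lower bound and a nonpositive one for the upper bound. In particular, when $\nu\equiv1$ we have $e\equiv0$, both bounds collapse to $2\,\SFG$, and this recovers Theorem~\ref{fuzzyComplement01}.

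The main obstacle is Step~2. The proof of Theorem~\ref{fuzzyComplement01} in the paper is written partly under $\nu\equiv1$, so I must check carefully that the identity $d_{\OG}-\lambda_{\OG}=e-\delta$ holds for general $\nu$. It does, since it uses only linearity of averaging, and it also fixes the correct sign of the cross term. Once that is secured, the rest is the routine Cauchy--Schwarz step.
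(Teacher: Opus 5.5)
Your proof is correct and follows the route the paper intends. You start from the expansion \eqref{eqq2fuzzyComplement01} in the proof of Theorem~\ref{fuzzyComplement01}, rightly checking that $d_{\OG}(v)-\lambda_{\OG}=e(v)-\delta(v)$ holds for arbitrary $\nu$; you then bound the cross term $\frac{1}{n}\sum_v e(v)\delta(v)$ by Cauchy--Schwarz, and your equality remarks and the observation that $e\equiv 0$ when $\nu\equiv 1$ are also correct.
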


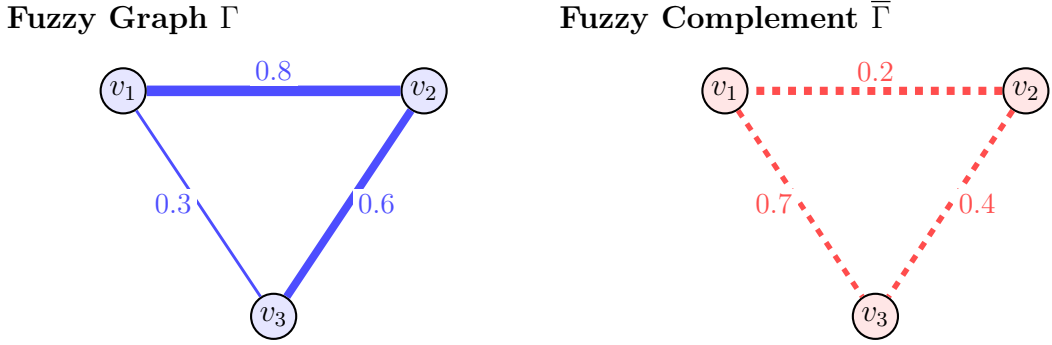
\begin{figure}[H]
    \centering
\begin{tikzpicture}[
    vertex/.style={circle, draw, thick, minimum size=6mm, inner sep=0pt, font=\bfseries},
    edge/.style={thick},
    label/.style={font=\small, fill=white, inner sep=2pt, midway}
]
% ========== Original Fuzzy Graph ================
\node[vertex, fill=blue!10] (A1) at (0,3) {$v_1$};
\node[vertex, fill=blue!10] (A2) at (4,3) {$v_2$};
\node[vertex, fill=blue!10] (A3) at (2,0) {$v_3$};

\draw[edge, line width=4pt, blue!70] (A1) -- (A2) node[label, above] {0.8};
\draw[edge, line width=1.1pt, blue!70] (A1) -- (A3) node[label, left] {0.3};
\draw[edge, line width=3pt, blue!70] (A2) -- (A3) node[label, right] {0.6};

\node[above=0.3cm of A1, font=\bfseries] {Fuzzy Graph $\Gamma$};

% =========== Fuzzy Complement Graph ==============
\node[vertex, fill=red!10] (B1) at (8,3) {$v_1$};
\node[vertex, fill=red!10] (B2) at (12,3) {$v_2$};
\node[vertex, fill=red!10] (B3) at (10,0) {$v_3$};

\draw[edge, line width=2pt, red!70, dashed] (B1) -- (B3) node[label, left] {0.7};
\draw[edge, line width=3.1pt, red!70, dashed] (B2) -- (B1) node[label, above] {0.2};
\draw[edge, line width=2.1pt, red!70, dashed] (B2) -- (B3) node[label, right] {0.4};

\node[above=0.3cm of B1, font=\bfseries] {Fuzzy Complement $\OG$};

%\draw[->, thick, gray, line width=1.1pt] (5.5,1.5) -- (6.5,1.5) 
%    node[midway, above, font=\small] {Complement: $\overline{\mu}(uv) = 1 - \mu(uv)$};
\end{tikzpicture}
\caption{Fuzzy complement graph.}
    \label{fig002fuzzygraph}
\end{figure}

%=======================================================
\section{Extremal Characterization of Fuzzy Sigma Index}
%=======================================================
We have now totally solved the second outstanding problem: characterise the fuzzy graphs that obtain the minimum and maximum values of $\SFG$. Theorem~\ref{fuzzyExtremal01} had provided  the fuzzy-regular graph.

\begin{theorem}~\label{fuzzyExtremal01}
Let $\Gamma=(V, \nu,\mu)$ be a fuzzy graph. Then, $\SFG=0$ if and only if $G$ is fuzzy-regular, that is, there exists a constant $r \ge 0$ such that $d_\Gamma(v)=r$ for every $v \in V(\Gamma)$. 
\end{theorem}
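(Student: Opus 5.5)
The plan is to argue directly from Definition~\ref{defFuzzysigma}: $\SFG$ is $\frac{1}{n}$ times a sum of squares of real numbers, so it vanishes exactly when every square vanishes. I will use only the definition of the fuzzy degree in \eqref{eqq1defFuzzyGraph} and the identity $\ew(\Gamma)=\frac12\sum_{v}d_\Gamma(v)$. The latter gives $\lambda=\frac{2\ew}{n}=\frac1n\sum_v d_\Gamma(v)$, so $\lambda$ is the arithmetic mean of the degree sequence.

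For the forward direction, suppose $\SFG=0$. Each term $(d_\Gamma(v)-\lambda)^2$ is nonnegative and $n\ge 1$, so every term must be zero. Hence $d_\Gamma(v)=\lambda$ for every $v\in V(\Gamma)$, and $\Gamma$ is fuzzy-regular with $r=\lambda$. We have $r\ge0$ because $\mu\ge0$ forces every fuzzy degree to be nonnegative.

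For the converse, suppose $d_\Gamma(v)=r$ for all $v$. Then $2\ew(\Gamma)=\sum_v d_\Gamma(v)=nr$, so $\lambda=r$. Every summand in \eqref{eqq1defFuzzysigma} is then $(r-r)^2=0$, and therefore $\SFG=0$.

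No step is a genuine obstacle. The only point needing care is to justify that $\lambda$ equals the mean degree, via the handshake-type identity $\sum_v d_\Gamma(v)=2\ew(\Gamma)$. This holds because each unordered pair $\{u,v\}$ contributes $\mu(u,v)$ to both $d_\Gamma(u)$ and $d_\Gamma(v)$, using the symmetry of $\mu$. Once this identity is recorded, both directions are immediate from the nonnegativity of squares.
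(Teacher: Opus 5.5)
Your proposal is correct and takes the same route as the paper, which simply observes that $\SFG$ is the population variance of the degree sequence and so vanishes iff all degrees are equal. The paper leaves the identity $\sum_v d_\Gamma(v)=2\ew(\Gamma)$ implicit; you state it and use it to get $\lambda=r$ in the converse.
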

\begin{proof}
By definition, $\SFG$ is the population variance of the sequence $\{d_\Gamma(v)\}_{v \in V(\Gamma)}$. The variance is zero if and only if all elements of the sequence are equal. In this case, $\lambda=d_\Gamma(v)=r$. 
\end{proof}

A natural concern in the study of fuzzy graphs is how irregular a graph can be given a particular overall fuzzy size. The irregularity measure $\SFG$ reflects the dispersion of vertex degrees, thus it's important to discover its greatest possible value and characterise the extremal structures that achieve it. Theorem~\ref{fuzzyExtremal02} offers a sharp upper bound on $\SFG$ and a detailed description of the extremal fuzzy graphs.

\begin{theorem}~\label{fuzzyExtremal02}
Let $\Gamma=(V, \nu,\mu)$ be a fuzzy graph on $n \geqslant 2$ vertices. Then
\begin{equation}~\label{eqq1fuzzyExtremal02}
\SFG\leqslant \frac{2\,\ew(\Gamma)^2}{n} \left(1-\frac{2}{n}\right),
\end{equation}
with equality if and only if $\Gamma$ contains exactly one positive fuzzy edge  with membership value $\ew(\Gamma)$, and all other memberships are zero.
Then, there exist distinct vertices $u, v$ such that $\mu(u, v) = \ew(\Gamma)$ and $ \mu(x, y) = 0$ for all other pairs $\{x, y\} \neq \{u, v\}$ where  the degree sequence is $\{\ew(\Gamma), \ew(\Gamma), 0, \dots, 0\}$, consisting of $n-2$ zeros.
\end{theorem}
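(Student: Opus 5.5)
The plan is to use the identity $\SFG=\frac1n\sum_{v}d_\Gamma(v)^2-\lambda^2$ with $\lambda=2\ew(\Gamma)/n$. This follows by expanding the square in Definition~\ref{defFuzzysigma} and using $\sum_v d_\Gamma(v)=2\ew(\Gamma)$. Since $\lambda$ depends only on $\ew(\Gamma)$ and $n$, inequality \eqref{eqq1fuzzyExtremal02} is equivalent to
\[
\sum_{v\in V}d_\Gamma(v)^2\leqslant 2\,\ew(\Gamma)^2 .
\]
Indeed, if this holds, then $\SFG\leqslant \frac{2\ew(\Gamma)^2}{n}-\frac{4\ew(\Gamma)^2}{n^2}$, which is exactly the right-hand side of \eqref{eqq1fuzzyExtremal02}. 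So the whole proof reduces to a purely combinatorial estimate on the sum of squared degrees for a fixed fuzzy size.

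To prove that estimate, I would write $E^+$ for the set of unordered pairs $e=\{x,y\}$ with $\mu_e:=\mu(x,y)>0$. Expanding $d_\Gamma(v)^2=\bigl(\sum_{e\ni v}\mu_e\bigr)^2$ and summing over $v$ gives
\[
\sum_{v}d_\Gamma(v)^2=\sum_{e,f\in E^+} c(e,f)\,\mu_e\mu_f,
\]
where the sum runs over ordered pairs $(e,f)$ and $c(e,f)=|e\cap f|$ is the number of common endpoints. Since $c(e,e)=2$, $c(e,f)\leqslant 1$ for $e\neq f$, and all $\mu_e\geqslant 0$, we get
\[
\sum_v d_\Gamma(v)^2\leqslant 2\sum_{e,f}\mu_e\mu_f=2\Bigl(\sum_e\mu_e\Bigr)^2=2\,\ew(\Gamma)^2 .
\]
This proves the bound. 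The constraint $\mu\leqslant\min(\nu(u),\nu(v))$ plays no role in the inequality.

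For the equality case, equality forces $(2-c(e,f))\mu_e\mu_f=0$ for every ordered pair with $e\neq f$. Because $2-c(e,f)\geqslant 1$ for distinct pairs, at most one pair $\{u,v\}$ can have positive membership. Its membership is then $\mu(u,v)=\ew(\Gamma)$, and the degree sequence is $\{\ew(\Gamma),\ew(\Gamma),0,\dots,0\}$ with $n-2$ zeros. Conversely, a direct computation for this configuration gives equality, which confirms sharpness.

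The main obstacle I anticipate is the degenerate case $\ew(\Gamma)=0$. There equality holds trivially with no positive edge, so the ``exactly one positive edge'' clause must be read under the assumption $\ew(\Gamma)>0$, and I would state that explicitly.
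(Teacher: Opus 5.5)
Your proof is correct. Its skeleton matches the paper's: both use $\SFG=\frac1n\sum_v d_\Gamma(v)^2-\lambda^2$ to reduce \eqref{eqq1fuzzyExtremal02} to $\sum_v d_\Gamma(v)^2\leqslant 2\,\ew(\Gamma)^2$, and then evaluate the single-edge configuration.

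The difference is that the paper never proves that key estimate. It cites Theorem~\ref{conjecturefuzzy}, whose bound is strictly weaker and cannot give \eqref{eqq1fuzzyExtremal02}. It then simply asserts $\sum d_i^2\le 2\ew(\Gamma)^2$ for ``real numbers $d_i\ge 0$ with fixed sum $2\ew(\Gamma)$''. At that level of generality the claim is false: $d_1=2\ew(\Gamma)$, $d_i=0$ otherwise, gives $4\ew(\Gamma)^2$. The paper also states without argument that the degree sequence $\{\ew(\Gamma),\ew(\Gamma),0,\dots,0\}$ forces a single positive edge.

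Your expansion $\sum_v d_\Gamma(v)^2=\sum_{e,f}\lvert e\cap f\rvert\,\mu_e\mu_f$ supplies exactly the missing graph-theoretic input: each edge feeds two degrees, and distinct pairs share at most one endpoint. It also settles the equality case directly at the level of edges rather than degrees.

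An equally short alternative uses $d_\Gamma(v)\le \ew(\Gamma)$ for every $v$. This gives $\sum_v d_\Gamma(v)^2\le \ew(\Gamma)\sum_v d_\Gamma(v)=2\ew(\Gamma)^2$, with equality iff every degree is $0$ or $\ew(\Gamma)$. That in turn forces every positive edge to contain both full-degree vertices.

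Your remark on $\ew(\Gamma)=0$ is a genuine correction to the statement. In that case equality holds with no positive edge, so the ``exactly one positive edge'' clause requires $\ew(\Gamma)>0$.
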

\begin{proof}
According to Theorem~\ref{conjecturefuzzy}, we find that 
\[
\SF(\Gamma) \leqslant \dfrac{(n-1)^2 \Big(2\ew(\Gamma)\Big)^2}{n^3}.
\]
Let $d_1, \dots, d_n \geq 0$ be real numbers with fixed sum $\alpha=2\,\ew(\Gamma)$. Since 
\begin{equation}~\label{eqq2fuzzyExtremal02}
\sum_{i=1}^n d_i^2 \leq 2\,\ew(\Gamma)^2, 
\end{equation}
The upper bound~\eqref{eqq2fuzzyExtremal02} is attained if and only if the degree sequence is $\{\ew(\Gamma), \ew(\Gamma), 0, \dots, 0\}$. This occurs precisely when all membership is concentrated on a single pair of vertices $u, v$ with $\mu(u, v)= \ew(\Gamma)$. In this case, we emphasize that $d_\Gamma(u)=\ew(\Gamma)$, $d_\Gamma(v)=\ew(\Gamma)$ and $d_\Gamma(w)=0$ where $w \notin \{u, v\}.$ Thus, 
\begin{align*}
\SFG &=\frac{2(\ew(\Gamma)-\lambda)^2+(n-2)(0-\lambda)^2}{n}\\
&=\frac{2\ew(\Gamma)^2}{n}\left(1-\frac{2}{n}\right).
\end{align*}
Therefore, according to~\eqref{eqq2fuzzyExtremal02} the relationship~\eqref{eqq1fuzzyExtremal02} holds.
\end{proof}

\begin{corollary}~\label{fuzzycorollary03}
For fixed $\ew(\Gamma)$, the most irregular fuzzy graphs are precisely the fuzzy single-edge graphs, consisting of one fuzzy edge of weight $\ew(\Gamma)$ and $n-2$ isolated vertices. 
\end{corollary}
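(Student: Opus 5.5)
The plan is to deduce the corollary directly from Theorem~\ref{fuzzyExtremal02}. I would, however, re-derive the key inequality~\eqref{eqq2fuzzyExtremal02} cleanly, because the equality case is what carries the characterization. First I rewrite the index as
\[
\SFG=\frac{1}{n}\sum_{v\in V}d_\Gamma(v)^2-\lambda^2, \qquad \lambda=\frac{2\ew(\Gamma)}{n}.
\]
For fixed $n$ and fixed $\ew(\Gamma)$, maximizing $\SFG$ is then the same as maximizing $\sum_v d_\Gamma(v)^2$.

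The crucial structural constraint is $d_\Gamma(v)=\sum_{u\neq v}\mu(v,u)\leqslant \sum_{x<y}\mu(x,y)=\ew(\Gamma)$ for every vertex $v$, since the edges at $v$ form a subset of all edges. Together with $d_\Gamma(v)\geqslant 0$ and $\sum_v d_\Gamma(v)=2\ew(\Gamma)$, the degree vector therefore lies in the polytope
\[
P=\Bigl\{x\in[0,\ew(\Gamma)]^n:\ \textstyle\sum_i x_i=2\ew(\Gamma)\Bigr\}.
\]
The function $x\mapsto\sum_i x_i^2$ is strictly convex, so its maximum on $P$ is attained only at extreme points of $P$. An extreme point of a box intersected with a hyperplane has at most one coordinate strictly between the bounds. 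Because the sum $2\ew(\Gamma)$ is an exact multiple of the cap $\ew(\Gamma)$, the extreme points are exactly the vectors with two coordinates equal to $\ew(\Gamma)$ and the rest equal to $0$. This gives $\sum_v d_\Gamma(v)^2\leqslant 2\ew(\Gamma)^2$, hence the bound of Theorem~\ref{fuzzyExtremal02}. Strict convexity also gives equality only for the degree sequence $\{\ew(\Gamma),\ew(\Gamma),0,\dots,0\}$.

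Next I translate this degree sequence back into the graph. Suppose $d_\Gamma(u)=d_\Gamma(v)=\ew(\Gamma)$. Equality in $d_\Gamma(u)\leqslant\ew(\Gamma)$ forces every positive edge to be incident to $u$, and likewise every positive edge must be incident to $v$. So the only possible positive edge is $uv$, with $\mu(u,v)=\ew(\Gamma)$, and all other vertices are isolated. Conversely, the direct computation already given in the proof of Theorem~\ref{fuzzyExtremal02} shows that such a single-edge graph attains the bound. This proves the characterization. The degenerate case $\ew(\Gamma)=0$ is trivial: everything is $0$ and all graphs tie.

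The main obstacle I expect is realizability rather than the inequality itself. A single fuzzy edge of weight $\ew(\Gamma)$ exists only if $\ew(\Gamma)\leqslant\min(\nu(u),\nu(v))\leqslant 1$, as required by Definition~\ref{defFuzzyGraph}. I would therefore state the corollary under the hypothesis that this holds; otherwise the supremum over $P$ is not attained within the class and the extremal problem needs the additional box constraints $\mu\leqslant\min\nu$. I would flag this explicitly and treat the corollary as valid when $\ew(\Gamma)\leqslant 1$, with vertex memberships large enough.
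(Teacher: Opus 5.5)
Your proof is correct. Its skeleton is the paper's: the corollary is read off from Theorem~\ref{fuzzyExtremal02} by maximizing $\sum_v d_\Gamma(v)^2$ at fixed sum $2\ew(\Gamma)$, showing that the maximum $2\ew(\Gamma)^2$ occurs only at $\{\ew(\Gamma),\ew(\Gamma),0,\dots,0\}$, and then concentrating all membership on one pair. The step that makes this work, however, is yours and not the paper's.

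\begin{itemize}
\item \textbf{The missing degree cap.} The paper justifies \eqref{eqq2fuzzyExtremal02} only by saying the $d_i$ are nonnegative reals with sum $2\ew(\Gamma)$. Under that hypothesis alone the inequality is false: $(2\ew(\Gamma),0,\dots,0)$ gives $4\ew(\Gamma)^2$. Your cap $d_\Gamma(v)\leqslant\ew(\Gamma)$, which holds because the edges at $v$ are a sub-collection of all edges, is exactly what rules this out. Your extreme-point argument on the box-constrained polytope then gives both the bound and the uniqueness of the maximizing degree vector.
\item \textbf{From degrees back to the graph.} Your incidence argument (equality in $d_\Gamma(u)\leqslant\ew(\Gamma)$ forces every positive edge to meet $u$) replaces the paper's unsupported ``this occurs precisely when\dots''.
\item \textbf{Realizability.} Your caveat is also correct, and it is not a mere technicality. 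Since $\mu(u,v)\leqslant\min(\nu(u),\nu(v))\leqslant 1$, no single-edge graph has $\ew(\Gamma)>1$. Yet for fixed $n$, the set of admissible $(\nu,\mu)$ with prescribed $\ew(\Gamma)$ is compact and $\SFG$ is continuous in $\mu$. So a maximizer exists, and it is not a single-edge graph. The corollary as stated therefore needs a hypothesis guaranteeing that a single edge of weight $\ew(\Gamma)$ is admissible: for fixed $\nu$, some pair with $\min(\nu(u),\nu(v))\geqslant\ew(\Gamma)$, and in particular $\ew(\Gamma)\leqslant 1$. The paper omits this hypothesis, and your version is the correct statement.
\end{itemize}
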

In contrast, based on Corollary~\ref{fuzzycorollary03} fuzzy star graphs with multiple leaves are not extremal and yield strictly smaller values of $\SFG$.

\medskip 

Let $\Gamma$ be a fuzzy graph on $n=6$ vertices  such that is \textit{fuzzy 4-regular}.  We build it as the union of two edge-disjoint fuzzy 2-regular graphs (i.e., two fuzzy cycles) with proper memberships.
Let the vertices be $v_1,v_2,v_3,v_4,v_5,v_6$. Suppose we assign constant edge membership $\alpha = 0.4$ on the edges of two disjoint cycles such as Figure~\ref{fig001regfuzzy}. Then each vertex has exactly four incident fuzzy edges:
\[
d_\Gamma(v_i)=0.4+0.4+0.2=1.0 \quad \text{for all } i=1,\dots,6.
\]
The fuzzy size is $\ew(\Gamma)=3\times 0.4 + 3\times 0.4 + 3\times 0.2=3$ and $\lambda=1$. Since all fuzzy degrees are equal to $\lambda$, we have $\SFG=0$. 

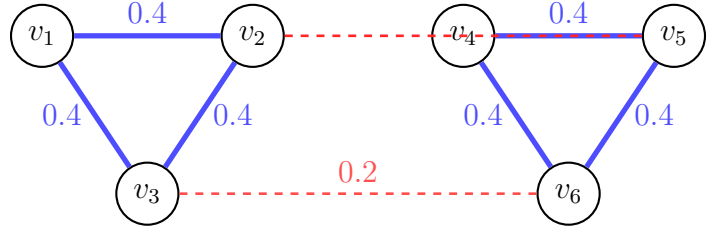
\begin{figure}[H]
\centering
\begin{tikzpicture}[scale=1.4]
    % Vertices
    \node[circle, draw, thick, minimum size=1mm] (v1) at (0,2) {$v_1$};
    \node[circle, draw, thick, minimum size=1mm] (v2) at (2,2) {$v_2$};
    \node[circle, draw, thick, minimum size=1mm] (v3) at (1,0.5) {$v_3$};
    
    \node[circle, draw, thick, minimum size=1mm] (v4) at (4,2) {$v_4$};
    \node[circle, draw, thick, minimum size=1mm] (v5) at (6,2) {$v_5$};
    \node[circle, draw, thick, minimum size=1mm] (v6) at (5,0.5) {$v_6$};

    % Cycle 1 (thick blue)
    \draw[line width=2pt, blue!70] (v1) -- (v2) node[midway, above] {0.4};
    \draw[line width=2pt, blue!70] (v2) -- (v3) node[midway, right] {0.4};
    \draw[line width=2pt, blue!70] (v3) -- (v1) node[midway, left] {0.4};

    % Cycle 2 (thick blue)
    \draw[line width=2pt, blue!70] (v4) -- (v5) node[midway, above] {0.4};
    \draw[line width=2pt, blue!70] (v5) -- (v6) node[midway, right] {0.4};
    \draw[line width=2pt, blue!70] (v6) -- (v4) node[midway, left] {0.4};

%    \draw[line width=1pt, red!70, dashed] (v1) -- (v4) node[midway, above] {0.2};
\draw[line width=1pt, red!85, dashed] (v2) -- (v5) node[midway, right] { };
    \draw[line width=1pt, red!70, dashed] (v3) -- (v6) node[midway, above] {0.2};
\end{tikzpicture}
\caption{A fuzzy 4-regular graph on 6 vertices with $\SFG=0$.}~\label{fig001regfuzzy}
\end{figure}

\section{Conclusion}

This paper introduced and systematically analyzed the fuzzy sigma index, defined as the population variance of the fuzzy degree sequence of a fuzzy graph $\Gamma$.  We established several fundamental properties of this index. In particular, we proved that $\SF(\Gamma) = 0$ if and only if $\Gamma$ is fuzzy-regular. We further showed that the maximum value of $\SFG$ is attained precisely when $\Gamma$ consists of a single fuzzy edge carrying the entire membership weight. Additionally, a sharp upper bound was derived in terms of the fuzzy size $\ew$ and the number of vertices.

Investigated the behavior of the fuzzy sigma index under several key fuzzy graph operations. Notably, the Cartesian product satisfies the additive property $\SF(\Gamma_1 \square \Gamma_2) = \SF(\Gamma_1) + \SF(\Gamma_2)$. The effects of union, join, tensor product, and composition were likewise examined, revealing distinct patterns in how irregularity propagates through these operations.
Consequently, this work establishes a foundation for the study of variance-based topological indices in fuzzy graph theory. Future research directions include weighted extensions of the fuzzy sigma index, its connections with other fuzzy topological indices, and potential applications in fuzzy network analysis.


\begin{thebibliography}{99}
%==========================
\bibitem{Abdo2015Dimitrov} H. Abdo, D. Dimitrov, Non-regular graphs with minimal total irregularity, {\em Bull. Aust. Math. Soc.} \textbf{92} (2015) 1--10.
\bibitem{Albalahi2023Alanazi} A. Ali, A. M. Albalahi, A. M. Alanazi, A. A. Bhatti,
A. E. Hamza, On the maximum sigma index of k-cyclic graphs,
{\em Discrete Appl. Math.} \textbf{325} (2023) 58--62,
\url{https://doi.org/10.1016/j.dam.2022.10.009}.

\bibitem{Ascioglu2018Cangul} M. Ascioglu, I. N. Cangul, Sigma index and forgotten
index of the subdivision and r-subdivision graphs,
{\em Proc. Jangjeon Math. Soc.} \textbf{21}(2) (2018) 1--14.

\bibitem{Criado2014Flores} R. Criado, J. Flores, A. G. del Amo, M. Romance,
Centralities of a network and its line graph: an analytical comparison by
means of their irregularity, {\em Int. J. Comput. Math.} \textbf{91} (2014)
304--314, \url{https://doi.org/10.1080/00207160.2013.793316}.

\bibitem{Jahanbai2021Sheikholeslami} Z. Du, A. Jahanbai, S. M. Sheikholeslami,
Relationships between Randic index and other topological indices,
{\em Commun. Comb. Optim.} \textbf{6}(1) (2021) 137--154.

\bibitem{Dimitrov2023Stevanović} D. Dimitrov, D. Stevanović, On the
$\sigma_t$-irregularity and the inverse irregularity problem,
{\em Appl. Math. Comput.} \textbf{441} (2023) \#127709.

\bibitem{Filipovski2024Dimitrov} S. Filipovski, D. Dimitrov, M. Knor, R. Škrekovski,
Some results on $\sigma_t$-irregularity, {\em arXiv e-prints} (2024),
arXiv:2411.

\bibitem{Gao2016WangMR} W. Gao, W. Wang, M. R. Farahani, Topological indices study
of molecular structure in anticancer drugs, {\em J. Chem.} (2016),
\url{http://dx.doi.org/10.1155/2016/3216327}.

\bibitem{Gutman2018I} I. Gutman, Topological indices and irregularity measures,
{\em Bull. Int. Math. Virtual Inst.} \textbf{8} (2018) 469--475.

\bibitem{gutman2018inverse} I. Gutman, M. Togan, A. Yurttas, A. S. Cevik, I. N. Cangul,
Inverse problem for sigma index, {\em MATCH Commun. Math. Comput. Chem.}
\textbf{79}(3) (2018) 491--508.

\bibitem{Jahanbanı2019Ediz} A. Jahanbanı, S. Ediz, The sigma index of graph operations,
{\em Sigma J. Eng. Nat. Sci.} \textbf{37}(1) (2019) 155--162.

\bibitem{Knor2025RQW} M. Knor, R. Škrekovski, S. Filipovski, D. Dimitrov,
Extremizing antiregular graphs by modifying total $\sigma$-irregularity,
{\em Appl. Math. Comput.} \textbf{490} (2025) \#129199,
\url{https://doi.org/10.1016/j.amc.2024.129199}.

\bibitem{MordesonMathew2024} J. N. Mordeson, S. Mathew, Fuzzy Mathematics, Graphs,
and Similarity Measures, {\em Elsevier}, Chapter 12 – Sigma index (2024)
161--182.

\end{thebibliography}
\end{document}